\begin{document}

\newcommand{\ddd}{\,{\rm d}}

\def\note#1{\marginpar{\small #1}}

\def\tens#1{\pmb{\mathsf{#1}}}

\def\vec#1{\boldsymbol{#1}}

\def\norm#1{\left|\!\left| #1 \right|\!\right|}

\def\fnorm#1{|\!| #1 |\!|}

\def\abs#1{\left| #1 \right|}

\def\ti{\text{I}}

\def\tii{\text{I\!I}}

\def\tiii{\text{I\!I\!I}}

\newcommand{\supp}{\operatorname{supp}}
\newcommand{\esup}{\operatorname{ess\,sup\,}\displaylimits}

\def\diver{\mathop{\mathrm{div}}\nolimits}

\def\grad{\mathop{\mathrm{grad}}\nolimits}

\def\Div{\mathop{\mathrm{Div}}\nolimits}

\def\Grad{\mathop{\mathrm{Grad}}\nolimits}

\def\tr{\mathop{\mathrm{tr}}\nolimits}

\def\cof{\mathop{\mathrm{cof}}\nolimits}

\def\det{\mathop{\mathrm{det}}\nolimits}

\def\lin{\mathop{\mathrm{span}}\nolimits}

\def\pr{\noindent \textbf{Proof: }}

\def\pp#1#2{\frac{\partial #1}{\partial #2}}

\def\dd#1#2{\frac{\d #1}{\d #2}}

\def\bA{\tens{A}}
\def\bH{\tens{H}}

\def\T{\mathcal{T}}

\def\R{\mathcal{R}}

\def\bx{\vec{x}}

\def\be{\vec{e}}

\def\bef{\vec{f}}

\def\bec{\vec{c}}

\def\bs{\vec{s}}

\def\ba{\vec{a}}

\def\bn{\vec{n}}

\def\bphi{\vec{\varphi}}

\def\btau{\vec{\tau}}

\def\bc{\vec{c}}

\def\bg{\vec{g}}

\def\mO{\mathcal{O}}
\def\pmO{\partial\mathcal{O}}

\def\bE{\tens{\varepsilon}}
\def\bsig{\tens{\sigma}}
\def\bbeta{\tens{\beta}}

\def\bW{\tens{W}}

\def\bT{\tens{T}}

\def\bxi{\tens{\xi}}

\def\bD{\tens{D}}

\def\bF{\tens{F}}

\def\bB{\tens{B}}

\def\bV{\tens{V}}

\def\bS{\tens{S}}

\def\bI{\tens{I}}

\def\bi{\vec{i}}

\def\bv{\vec{v}}

\def\bfi{\vec{\varphi}}

\def\bk{\vec{k}}

\def\b0{\vec{0}}

\def\bom{\vec{\omega}}

\def\bw{\vec{w}}

\def\p{\pi}

\def\bu{\vec{u}}
\def\bz{\vec{z}}
\def\bep{\vec{e}_{\textrm{p}}}
\def\dbep{\dot{\vec{e}}_{\textrm{p}}}
\def\bee{\vec{e}_{\textrm{el}}}
\def\dbee{\dot{\vec{e}}_{\textrm{el}}}

\def\ID{\mathcal{I}_{\bD}}

\def\IP{\mathcal{I}_{p}}

\def\Pn{(\mathcal{P})}

\def\Pe{(\mathcal{P}^{\eta})}

\def\Pee{(\mathcal{P}^{\varepsilon, \eta})}

\def\dx{\,{\rm dx}}


\newtheorem{Theorem}{Theorem}[section]

\newtheorem{Example}{Example}[section]

\newtheorem{Lemma}{Lemma}[section]

\newtheorem{Rem}{Remark}[section]

\newtheorem{Def}{Definition}[section]

\newtheorem{Col}{Corollary}[section]

\numberwithin{equation}{section}

\title[Regularity results in elasto-plasticity theory with hardening]{Regularity results for two standard models in elasto-perfect-plasticity theory with hardening}

\author[M.~Bul\'{i}\v{c}ek]{Miroslav Bul\'i\v{c}ek}
\address{Mathematical Institute of Charles University\\
Sokolovsk\'{a} 83, 186 75 Prague, Czech Republic}
\email{mbul8060@karlin.mff.cuni.cz}
\thanks{M.~Bul\'{\i}\v{c}ek's work is supported by the project 20-11027X financed by GA\v{C}R. Authors are also thankful to the Hausdorff center in Bonn. M.~Bul\'{\i}\v{c}ek is a  member of the Ne\v{c}as Center for Mathematical Modeling}

\author[J. Frehse]{Jens Frehse}
\address{Institute of Applied Mathematics, University of Bonn\\
Endenicher Allee 60, D-53121 Bonn, Germany}
\email{aaa@iam.uni-bonn.de}

\author[M. Specovius-Neugebauer]{Maria Specovius--Neugebauer }
\address{Fachbereich 17, Mathematik, Universit\"{a}t Kassel\\
HeinrichPlettStr.40, D-34132 Kassel, Germany}
\email{specovi@mathematik.uni-kassel.de}

%
\keywords{elasto-perfect-plasticity with hardening,  Cauchy stress, boundary regularity, fractional regularity}
\subjclass[2000]{74G40, 35Q72, 74C05, 74G10}

\dedicatory{Dedicated to Umberto Mosco}

\begin{abstract}
We consider two most studied standard models in the theory of elasto-plasticity with hardening in arbitrary dimension $d\ge 2$, namely,  the kinematic hardening and the isotropic hardening problem. While the existence and uniqueness of the solution is very well known, the optimal regularity up to the boundary remains an open problem. Here, we show that in the interior we have Sobolev regularity for the stress and hardening while for their time derivatives we have the ``half" derivative with the spatial and time variable. This was well known for the limiting problem but we show that these estimates are uniform and independent of the order of approximation. The main novelty consist of estimates near the boundary. We show that for the stress and the hardening parameter, we control tangential derivative in the Lebesgue space~$L^2$, and for time derivative of the stress and the hardening we control the ``half" time derivative and also spatial tangential derivative. Last, for the normal derivative, we show that the stress and the hardening have the $3/5$ derivative with respect to the normal and for the time derivative of the stress and the hardening we show they have the $1/5$ derivative with respect to the normal direction, provided we consider the kinematic hardening or near the Dirichlet boundary. These estimates are independent of dimension. In case, we consider the isotropic hardening near the Neumann boundary we shall obtain $W^{\alpha,2}$ regularity for the stress and the hardening with some $\alpha>1/2$ depending on the dimension and $W^{\beta,2}$ with some $\beta > 1/6$ for the time derivative of the stress and the hardening. Finally, in case of kinematic hardening the same regularity estimate holds true also for the velocity gradient.
\end{abstract}

\maketitle

\section{Introduction}
In this paper we deal with the regularity estimates for solutions to some  models of linearized  elasto--plasticity with hardening. We have mainly two cases in mind, the isotropic hardening and the kinematic harding. Our main goal is to provide the uniform estimates on the Cauchy stress and the hardening parameter and their time derivatives in fractional Sobolev spaces up to the boundary and consequently by using an interpolation technique also to improve the available regularity results for the small strain tensor.

\subsection{Physical background}

To describe the problem in more details, we shall assume that a body occupies a Lipschitz set $\mathcal{O} \subset \mathbb{R}^d$ and we a~priori assume that considered deformations are small. Therefore, the initial, the current and the preferred (natural) configurations coincide and we can approximate the strain tensor by the linearized strain tensor $\bE(\bu)$, which is defined as
\begin{equation}\label{linstrain}
\bE(\bu):= \frac12 (\nabla \bu + (\nabla \bu)^T)\,,
\end{equation}
where $\bu:(0,T)\times \mathcal{O} \to \mathbb{R}^d$ is the displacement and the interval $(0,T)$ represents the loading parameter, which we call ``time" in what follows. We also assume that the density is constant and that the inertial effects can be neglected. Then the balance of linear momentum for the quasi-static deformation takes the form
\begin{equation}
   -\diver \bsig = \bef \quad \textrm{ in } [0,T]\times \mathcal{O}, \label{Newton}
\end{equation}
where $\bsig:(0,T)\times \mathcal{O}\to \mathbb{R}^{d\times d}_{sym}$ is the Cauchy stress and $\bef:(0,T)\times \mathcal{O}\to \mathbb{R}^d$ denotes the density of given external body forces. To complete the problem \eqref{linstrain}--\eqref{Newton} it remains to prescribe the boundary and initial conditions, which we shall do later, and also to characterize the relationship between $\bsig$ and $\bE(\bu)$. Since we deal with elasto--platic effects, we assume that the linearized strain $\bE(\bu)$ can be decomposed into the elastic part $\bee$ and the plastic part $\bep$, i.e.,
\begin{equation}
     \bE(\bu) = \bee + \bep\, \label{Sum}
\end{equation}
and that the elastic response of the material is given by the Helmholtz potential $\psi^*:\mathbb{R}^{d\times d}_{sym} \to \mathbb{R}$, which is supposed to be a strictly convex function vanishing at zero and exploding at infinity and the elastic strain is related to the stress through
\begin{equation}
    \bsig = \frac{\partial \psi^*(\bee)}{\partial \bee} \quad \Leftrightarrow \quad  \bee = \frac{\partial \psi(\bsig)}{\partial \bsig}\,, \label{isotropic_1}
\end{equation}
where $\psi$ is the conjugate function to $\psi^*$ defined as
$$
\psi(\bsig):=\sup_{\bee} \left( \bsig \cdot \bee - \psi^*(\bee)\right)\,.
$$
Furthermore, we require in the paper that there exists a constant fourth order tensor $\bA \in \mathbb{R}^{d\times d}_{sym}\times \mathbb{R}^{d\times d}_{sym}$ such that for all $\bsig \in \mathbb{R}^{d\times d}_{sym}$
\begin{equation}\label{ellip2}
\bA\equiv \frac{\partial^2 \psi(\bsig)}{\partial \bsig \partial \bsig}.
\end{equation}
Then, evidently, the relation \eqref{isotropic_1} can be rewritten as
\begin{equation}
    \bsig = \bA^{-1}\bee \quad \Leftrightarrow \quad  \bee = \bA \bsig\,. \label{isotropic_1_1}
\end{equation}
Note that thanks to \eqref{ellip2}, the tensor $\bA$ is symmetric. In addition, since $\psi$ is assumed to be convex, we certainly know that $\bA$ is invertible and therefore \eqref{isotropic_1_1} makes good sense.

Concerning the plastic strain, we  assume that (usually $\bep$ is considered to be relevant to incompressible motion)
\begin{equation}
   \tr \bep = 0\,, \label{incompress}
\end{equation}
where $\tr$ denotes the trace of $\bep$. 
Further, we need to specify under which conditions it may appear and how is related to the ``hardening". In the paper, we shall assume two cases, the {\bf isotropic hardening}, which is described by a scalar function
$$\xi:[0,T]\times \mO \to \mathbb{R},$$
which is related to the plastic strain by the following flow rule (here $H$ is a given positive constant)
\begin{equation}\label{hard_iso}
H\dot{\xi} = |\dot{\bep}|.
\end{equation}
Here, and also in what follows, we use the ``dot" to abbreviate the partial derivative with respect to the variable $t$, i.e., 
$$
\dot{z}:=\frac{\partial z}{\partial t}
$$
for any function $z$ depending on $t$.
Furthermore, we require the von~Mises condition
$$
|\bsig_D| - \xi \le \kappa,
$$
and we assume that there is no plastic strain if the above inequality is strict, i.e.,
$$
|\bsig_D| - \xi < \kappa \implies \dbep=0.
$$
On the other hand, if $|\bsig_D| - \xi = \kappa$, we require that
$$
\frac{\bsig_D}{|\bsig_D|} = \frac{\dbep}{|\dbep|}.
$$
To summarize, we can write these conditions in a more compact form
\begin{equation}
   \begin{split}
  \dbep = \dot{\lambda} \frac{\bsig_D}{|\bsig_D|} \quad &\textrm{ with } \quad \dot{\lambda} \ge 0\,, \quad |\bsig_D| - \xi \le \kappa \quad \textrm{ and } \quad \dot{\lambda} \left( |\bsig_D| -\xi- \kappa \right) = 0\,.
  \end{split}\label{KT_iso}
\end{equation}

In a similar way, we shall define the {\bf kinematic hardening}. Hence, we assume that the hardening parameter
$$
\bxi:[0,T]\times \mO \to \mathbb{R}^{d\times d}_{sym}
$$
obeys the following flow rule (here $\bH$ is the symmetric positively definite fourth order tensor)
\begin{equation}\label{hard_kin}
\bH\dot{\bxi} = \dbep.
\end{equation}
The related von~Mises condition then reads as
$$
|\bsig_D - \bxi_D|  \le \kappa,
$$
and we assume that there is no plastic strain if the above inequality is strict, i.e.,
$$
|\bsig_D - \bxi_D| < \kappa \implies \dbep=0.
$$
On the other hand, if $|\bsig_D - \bxi_D| = \kappa$, we require that
$$
\frac{\bsig_D-\bxi_D}{|\bsig_D-\bxi_D|} = \frac{\dbep}{|\dbep|}.
$$
Again, the above conditions can be equivalently rewritten as
\begin{equation}
   \begin{split}
  \dbep = \dot{\lambda} \frac{\bsig_D-\bxi_D}{|\bsig_D-\bxi_D|}  &\textrm{ with }  \dot{\lambda} \ge 0\,, \quad |\bsig_D-\bxi_D|  \le \kappa \quad \textrm{ and } \quad \dot{\lambda} \left( |\bsig_D-\bxi_D|- \kappa \right) = 0\,.
  \end{split}\label{KT_kin}
\end{equation}

We do not claim that two models introduced above are the only ones of physical and engineering interest. Indeed, there is a lot of models describing elastic and plastic deformation with memory effects, see for example the book \cite{Al98}, and each of them can be used in a specific situation. The models of kinematic and isotopic hardening are just two most prototypic examples. On the other hand, it seems that the models discussed in this paper usually have better regularity properties than the others, and therefore it was also our motivation to focus on them.

\subsection{Weak formulation of the problem and the main result}
To complete the problem, we have to specify the boundary and initial values. We start with the hardening variable and we assume that $\bxi(0,x)\equiv 0$ in case of kinematic hardening and that $\xi(0,x)=0$ in case of isotropic hardening. Next, we assume that the boundary of the domain can be decomposed onto two smooth sets $\partial \mO_D$ - the Dirichlet part,  and $\partial\mO_N$ - the Neumann part with the unit normal outward vector denoted by $\bn$. Finally, we prescribe the data $\bu_0 :[0,T]\times \mO \to \mathbb{R}^d$ and $\bsig_0:[0,T] \times \mO \to \mathbb{R}^{d\times d}_{sym}$,  and we require that $\bu = \bu_0$ on $[0,T]\times \partial \mO_D$ (Dirichlet data for the displacement - the displacement on the part $\partial \mO_D$) and that $\bu(0)=\bu_0(0)$ (the initial displacement), and that $\bsig \cdot \bn = \bsig_0 \cdot \bn$ on $[0,T]\times \mO_N$ (the Neumann data - the traction on $\partial \mO_N$) and $\bsig(0)=\bsig_0(0)$ (the initial value of the Cauchy stress). Note here, that there is a necessary compatibility condition $\bE(\bu_0(0)) = \bA \bsig_0(0)$. To recall all data, we also have a given body forces $\bef:[0,T]\times \mO \to \mathbb{R}^d$ and the material parameters\footnote{For simplicity, we assume that the material parameters are constant. Nevertheless, we could allow them to be time dependent.}  $\kappa\ge C_1>0$ for some positive constant $C_1$, positively definite symmetric fourth order tensor $\bA$ and the constant $H>0$ in case of isotropic harding and the constant fourth order symmetric positively definite tensor $\bH$ in case of kinematic hardening. Here, positively definite means that there exists a positive constant $C_1$ such that $C_1\le H$ and such that for all $\btau \in \mathbb{R}^{d\times d}_{sym}$ there holds
\begin{equation}\label{ellip}
C_1 |\btau|^2\le  \bA \btau \cdot \btau \le C_1^{-1} |\btau|^2\quad \textrm{ and } \quad C_1 |\btau|^2\le \bH\btau\cdot \btau\le C_1^{-1} |\btau|^2.
\end{equation}
Finally, we can summarize the above description and formulate the problem of elasto-plastic hardening in the following classical way:

\bigskip

\paragraph{\bf Kinematic hardening:}  We look for a quintuple $(\bsig, \bxi, \bu, \bee, \bep ):[0,T]\times \mathcal{O} \to \mathbb{R}^{d\times d}_{sym}\times \mathbb{R}^{d\times d}_{sym} \times \mathbb{R}^d\times \mathbb{R}^{d\times d}_{sym}\times \mathbb{R}^{d\times d}_{sym}$ such that
\begin{equation}
\begin{aligned}
-\diver \bsig &= \bef, \quad \bH\dot{\bxi} = \dbep,\quad \bE(\bu)=\bee+\bep, \quad \bee = \bA \bsig &&\textrm{ in } [0,T]\times \mathcal{O}\,,\\
\dbep &= |\dbep| \frac{\bsig_D-\bxi_D}{\kappa} &&\textrm{ in } [0,T]\times \mathcal{O}\,,\\
\kappa& \ge |\bsig_D -\bxi_D|\, \textrm{ and  } \, |\dbep|(|\bsig_D-\bxi_D| - \kappa) = 0 &&\textrm{ in } [0,T]\times \mathcal{O}\,,\\
\bu&=\bu_0 &&\textrm{ on } [0,T]\times \partial \mathcal{O}_D\,,\\
\bsig \bn &= \bsig_0 \bn  &&\textrm{ on } [0,T]\times \partial \mathcal{O}_N\,, \\
\bsig(0)&=\bsig_0, \quad \bxi(0)=0, \quad \bu(0)=\bu_0(0) &&\textrm{ in } \mathcal{O}\,.
\end{aligned}
\label{PR11}
\end{equation}
where $T>0$ is the given length of the time\footnote{In fact, we should not call it time interval, since $t$ corresponds to the loading parameter.} interval, the given threshold $\kappa>0$ is  a von~Mises condition,  $\bef:[0,T]\times \mathcal{O} \to \mathbb{R}^d$ are the given volume forces,  the stress $\bsig_0:[0,T]\times \mathcal{O}\to \mathbb{R}^{d\times d}_{sym}$ represents the initial value $\bsig(0)$ and the traction $\bsig \bn$ and the prescribed displacement on the boundary $[0,T]\times \partial \mathcal{O}_D$ and the initial displacement is represented by $\bu_0:[0,T]\times \mathcal{O}\to \mathbb{R}^d$. Here the symbol $\bn$ denotes the outer normal vector on $\pmO$.

\bigskip

\paragraph{\bf Isotropic hardening:}
 We look for a quintuple $(\bsig, \xi, \bu, \bee, \bep ):[0,T]\times \mathcal{O} \to \mathbb{R}^{d\times d}_{sym}\times \mathbb{R} \times \mathbb{R}^d\times \mathbb{R}^{d\times d}_{sym}\times \mathbb{R}^{d\times d}_{sym}$ such that
\begin{equation}
\begin{aligned}
-\diver \bsig &= \bef, \quad H\dot{\xi} = |\dbep|,\quad \bE(\bu)=\bee+\bep, \quad \bee = \bA \bsig &&\textrm{ in } [0,T]\times \mathcal{O}\,,\\
\dbep &= |\dbep| \frac{\bsig_D}{\kappa+\xi} &&\textrm{ in } [0,T]\times \mathcal{O}\,,\\
\kappa& \ge |\bsig_D|-\xi\, \textrm{ and  } \, |\dbep|(|\bsig_D| - \kappa -\xi ) = 0 &&\textrm{ in } [0,T]\times \mathcal{O}\,,\\
\bu&=\bu_0 &&\textrm{ on } [0,T]\times \partial \mathcal{O}_D\,,\\
\bsig \bn &= \bsig_0 \bn  &&\textrm{ on } [0,T]\times \partial \mathcal{O}_N\,, \\
\bsig(0)&=\bsig_0, \quad \xi(0)=0, \quad \bu(0)=\bu_0(0) &&\textrm{ in } \mathcal{O}\,.
\end{aligned}
\label{He11}
\end{equation}

Furthermore, to simplify the notation, we require that (indeed, it is just simplification of a notation, in fact, to be able to solve the problem, the existence of $\bsig_0$ fulfilling this equation is necessary)
$$
\diver \bsig_0 = \bef \quad \textrm{almost everywhere in } (0,T)\times \mO.
$$
Then,  we choose a proper subspace of the Sobolev space $W^{1,2}(\mathcal{O}; \mathbb{R}^d)$, which will be used in what follows
$$
\mathcal{V}:=\{\bv \in W^{1,2}(\mathcal{O};\mathbb{R}^d); \; \bv = \b0 \textrm{ on } \partial \mathcal{O}_D\}
$$
and we define the set of admissible stresses as
\begin{align*}
\mathcal{F}_k(t)&:=\left\{(\bsig,\bxi)\in   L^2(\Omega; \mathbb{R}^{d\times d}_{sym})\times  L^2(\Omega; \mathbb{R}^{d\times d}_{sym}); \;
|\bsig_D-\bxi_D|\le \kappa,\right.\\
&\qquad \left.\; \, \textrm{ and for all } \bv \in \mathcal{V} \textrm{ there holds }\int_{\mathcal{O}} (\bsig-\bsig_0) \cdot \bE(\bv) \ddd x =0 \right\}\\
\intertext{and}
\mathcal{F}_i(t)&:=\left\{(\bsig,\xi) \in L^2(\Omega; \mathbb{R}^{d\times d}_{sym})\times L^2(\Omega;\mathbb{R}); \;
|\bsig_D|\le \kappa+\xi,\right.\\
&\qquad \left.\; \textrm{ and for all } \bv \in \mathcal{V} \textrm{ there holds } \int_{\mathcal{O}} (\bsig -\bsig_0)\cdot \bE(\bv) \ddd x = 0 \right\}.
\end{align*}
Notice here, that $\mathcal{F}_i$ corresponds to isotropic hardening while the set $\mathcal{F}_k$ is related to kinematic hardening and we can introduce the following definitions.
\begin{Def}[Kinematic hardening]\label{D1}
Let $\mO\subset \mathbb{R}^d$ be a Lipschitz domain. Assume that $\bsig_0\in W^{1,2}(0,T; L^2(\Omega;\mathbb{R}^{d\times d}_{sym}))$ and $\bu_0 \in W^{1,2}(0,T; W^{1,2}(\mO;\mathbb{R}^d))$. We say that $(\bsig, \bxi) \in W^{1,2}(0,T; L^2(\mO;\mathbb{R}_{sym}^{d\times d}))\times W^{1,2}(0,T; L^2(\mO;\mathbb{R}_{sym}^{d\times d}))$ is a weak solution to \eqref{PR11} if $\bsig(0)=\bsig_0$, $\bxi(0)=0$ and for almost all $t\in (0,T)$ there holds $(\bsig(t), \bxi(t))\in \mathcal{F}_k(t)$ and, in addition, we require that  for almost all $t\in (0,T)$ and all $(\tilde{\bsig}, \tilde{\bxi}) \in \mathcal{F}_k(t)$ there holds
\begin{equation}\label{wf1}
\int_{\mO} \bA \dot\bsig(t) \cdot (\bsig(t)-\tilde{\bsig})) + \bH \dot{\bxi} \cdot (\bxi-\tilde{\bxi})\ddd x \le \int_{\mO} \bE(\dot\bu_0) \cdot (\bsig(t)-\tilde{\bsig})\ddd x.
\end{equation}
\end{Def}
In a very similar way, we can also introduce the notion of a weak solution to the isotropic model \eqref{He11}, where we shall replace $\mathcal{F}_k(t)$ by $\mathcal{F}_i(t)$ in a natural way.
\begin{Def}[Isotropic hardening]\label{D2}
Let $\mO\subset \mathbb{R}^d$ be a Lipschitz domain. Assume that $\bsig_0\in W^{1,2}(0,T; L^2(\Omega;\mathbb{R}^{d\times d}_{sym}))$ and $\bu_0 \in W^{1,2}(0,T; W^{1,2}(\mO;\mathbb{R}^d))$. We say that $(\bsig, \xi) \in W^{1,2}(0,T; L^2(\mO;\mathbb{R}_{sym}^{d\times d}))\times W^{1,2}(0,T; L^2(\mO;\mathbb{R})$ is a weak solution to \eqref{He11} if $\bsig(0)=\bsig_0$, $\xi(0)=0$ and for almost all $t\in (0,T)$ there holds $(\bsig(t), \xi(t))\in \mathcal{F}_i(t)$ and, in addition, we require that  for almost all $t\in (0,T)$ and all $(\tilde{\bsig}, \tilde{\xi}) \in \mathcal{F}_i(t)$ there holds
\begin{equation}\label{wf2}
\int_{\mO} \bA \dot\bsig(t) \cdot (\bsig(t)-\tilde{\bsig})) + H \dot{\xi} (\xi-\tilde{\xi})\ddd x \le \int_{\mO} \bE(\dot\bu_0) \cdot (\bsig(t)-\tilde{\bsig})\ddd x.
\end{equation}
\end{Def}
Before stating the main result of the paper, we introduce the safety load condition for the initial data $\bsig_0(0)$, namely
\begin{equation}
\|\bsig_{0D}(0)\|_{\infty}<\kappa.\label{SLMB}
\end{equation}
The existence of weak solution to kinematic or isotropic hardening problem in the sense of Definitions~\ref{D1}--\ref{D2} is very standard. However, to be able to talk also about the displacement and to obtain regularity results, one usually needs to assume certain compatibility condition on data. In the available literature, the authors usually consider more restrictive assumption than \eqref{SLMB}, which is however more related to the classical problems of elasto--plasticity without hardening. Nevertheless, in our setting, the assumption \eqref{SLMB} is sufficient, since it leads to the standard safety load condition. Indeed, for kinematic hardening we can set $\bxi_0(t):=\bsig_0(t)-\bsig_0(0)$ and then directly we also have
\begin{equation}
(\bsig_0(t),\bxi_0(t))\in \mathcal{F}_k(t), \quad \sup_{t\in (0,T)}\|\bsig_{D0}(t)-\bxi_{0D}(t)\|_{\infty}< \kappa, \label{SL}
\end{equation}
provided \eqref{SLMB} holds. Similarly, in the isotropic hardening case, we se $\xi_0(t):=|\bsig_{0D}(t)|-|\bsig_{0D}(0)|$ and we again have
\begin{equation}
(\bsig_0(t),\xi_0(t))\in \mathcal{F}_i(t), \quad \sup_{t\in (0,T)} \||\bsig_{0D}(t)|-\xi(t)\|_{\infty}<\kappa, \label{SLH}
\end{equation}
provided \eqref{SLMB} holds.

Finally, we state the main results of the paper. We consider an approximated problem and show not only the convergence to the original problem but also regularity estimates that are uniform with respect to the approximation parameter.  The first one is for the kinematic hardening model. For the approximation, we introduce a new class of admissible stresses as
$$
\begin{aligned}
\mathcal{F}_{el}(t)&:=\left\{(\bsig,\bxi) \in L^2(\Omega; \mathbb{R}^{d\times d}_{sym})\times L^2(\Omega; \mathbb{R}^{d\times d}_{sym}); \right.\\
&\qquad \left. \textrm{ and for all } \bv \in \mathcal{V} \textrm{ there holds } \; \int_{\mathcal{O}} (\bsig-\bsig_0) \cdot \bE(\bv) \ddd x = 0 \right\}.
\end{aligned}
$$
and our result for kinematic hardening reads as follows.
\begin{Theorem}[Kinematic hardening]\label{TPR}
Let all assumptions of Definition~\ref{D1} be satisfied. Then for all $\mu>0$ there exists a unique triple $(\bsig^{\mu},\bxi^{\mu},\bu^{\mu})$ such that $\bu^{\mu}-\bu_0\in W^{1,2}(0,T; W^{1,2}_0(\mO;\mathbb{R}^d))$, $\bxi^{\mu}(0)=0$, $\bsig^{\mu}=\bsig_0(0)$ and
\begin{equation}
\left. \begin{aligned}
&\bA \dot{\bsig}^{\mu} + \mu^{-1}(|\bsig_D^{\mu}-\bxi_D|-\kappa)_+ \frac{\bsig^{\mu}_D-\bxi_D}{|\bsig^{\mu}_D-\bxi_D|} = \bE(\dot{\bu}^{\mu}),\\
&\bH \dot{\bxi} = \mu^{-1}(|\bsig_D^{\mu}-\bxi_D|-\kappa)_+ \frac{\bsig^{\mu}_D-\bxi_D}{|\bsig^{\mu}_D-\bxi_D|}
 \end{aligned}\right\}
 \qquad \textrm{ a.e. in } (0,T)\times \mO. \label{TPM}
\end{equation}
In addition, if the safety initial load condition \eqref{SLMB} holds and $\bsig_0$ and $\bu_0$ satisfy
\begin{equation}
\begin{split}
\label{AAK}
\bsig_0 &\in W^{2,\infty}(0,T; L^2(\mO; \mathbb{R}^{d\times d}_{sym})) \cap W^{1,\infty}(0,T; W^{1,2}(\mO;\mathbb{R}^{d\times d}_{sym})),\\
\bu_0 &\in W^{2,\infty}(0,T; W^{1,2}(\mO; \mathbb{R}^{d\times d}_{sym})) \cap W^{1,\infty}(0,T; W^{2,2}(\mO;\mathbb{R}^{d\times d}_{sym})),
\end{split}
\end{equation}
then we have the following uniform estimates
\begin{equation}\label{unifor-estimates}
\begin{split}
&\sup_{t\in (0,T)} \left(\|\dot\bsig^{\mu}(t)\|_2^2 + \|\dot{\bxi}^{\mu}(t)\|_2+\|\dot{\bu}(t)\|_{1,2}^2\right)\le C(\bu_0,\bsig_0,\mO,T)
\end{split}
\end{equation}
and there exists a sequence that we do not relabel such that
$$
(\bsig^{\mu}, \bxi^{\mu}, \bu^{\mu})\rightharpoonup^* (\bsig,\bxi, \bu)
$$
in the topology induced by the estimate \eqref{unifor-estimates}, where $(\bsig,\bxi, \bu)$ is a solution in a sense of Definition~\ref{D1} and satisfies \eqref{PR11} almost everywhere.

Moreover, if $\mO\in \mathcal{C}^{1,1}$ then for any compact $\tilde{\mO}\subset \mO$, any $\delta>0$ and arbitrary nonnegative $\phi \in \mathcal{C}^{\infty}$ fulfilling $\supp \phi \cap \overline{\partial \mO}_D \cap \overline{\partial \mO}_N=\emptyset$, we have the following estimate
\begin{equation}
\begin{split}
&\sup_{t\in (0,T)} \left( \|\bsig^{\mu}(t)\|_{W^{1,2}(\tilde{\mO})} +\|\bxi^{\mu}(t)\|_{W^{1,2}(\tilde{\mO})} +\|\bu^{\mu}(t)\|_{W^{2,2}(\tilde{\mO})}\right) \\
&+\sup_{t\in (0,T)} \left( \|\phi{\bsig}^{\mu}(t)\|_{N^{1,2}_{\btau}({\mO})} +\|\phi{\bxi}^{\mu}(t)\|_{N^{1,2}_{\btau}({\mO})} +\|\phi\nabla {\bu}^{\mu}(t)\|_{N^{1,2}_{\btau}({\mO})}\right) \\
&+\sup_{t\in (0,T)} \left( \|\phi{\bsig}^{\mu}(t)\|_{N^{\frac35-\delta,2}_{n}({\mO})} +\|\phi{\bxi}^{\mu}(t)\|_{N^{\frac35-\delta,2}_{n}({\mO})}  +\|\phi\nabla {\bu}^{\mu}(t)\|_{N^{\frac35-\delta,2}_{n}({\mO})} \right)\\
& + \|\dot{\bsig}^{\mu}\|_{N^{\frac12,2}(0,T; L^2(\mO))} +\|\dot{\bxi}^{\mu}\|_{N^{\frac12,2}(0,T; L^2(\mO))} +\|\nabla \dot{\bu}^{\mu}\|_{N^{\frac12,2}(0,T; L^2(\mO))}\\
& + \|\dot{\bsig}^{\mu}\|_{L^{2}(0,T; N^{\frac12,2}(\tilde{\mO}))} +\|\dot{\bxi}^{\mu}\|_{L^{2}(0,T; N^{\frac12,2}(\tilde{\mO}))} +\|\nabla \dot{\bu}^{\mu}\|_{L^{2}(0,T; N^{\frac12,2}(\tilde{\mO}))}\\
& + \|\phi\dot{\bsig}^{\mu}\|_{L^{2}(0,T; N^{\frac12,2}_{\btau}(\mO))} +\|\phi\dot{\bxi}^{\mu}\|_{L^{2}(0,T; N^{\frac12,2}_{\btau}(\mO))} +\|\phi\nabla \dot{\bu}^{\mu}\|_{L^{2}(0,T; N^{\frac12,2}_{\btau}(\mO))}\\
& + \|\phi\dot{\bsig}^{\mu}\|_{L^{2}(0,T; N^{\frac15-\delta,2}_{n}(\mO))} +\|\phi\dot{\bxi}^{\mu}\|_{L^{2}(0,T; N^{\frac15-\delta,2}_{n}(\mO))} +\|\phi\nabla \dot{\bu}^{\mu}\|_{L^{2}(0,T; N^{\frac15-\delta,2}_{n}(\mO))}\\
&\le C(\phi,\delta, \tilde{\mO}, \bu_0, \bsig_0),
\label{ident-limit}
\end{split}
\end{equation}
which due to the weak lower semicontinuity holds also for the limit $(\bsig, \bxi, \bu)$.
\end{Theorem}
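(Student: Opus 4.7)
The plan is to establish the theorem in four stages: (i) existence, uniqueness and uniform time-derivative bounds for the viscoplastic approximation \eqref{TPM}; (ii) interior and tangential $W^{1,2}$-type estimates via Nirenberg difference quotients; (iii) the delicate fractional normal regularity; and (iv) passage to the limit $\mu \to 0_+$ together with the time-fractional bounds.

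For stage (i), the coupled system \eqref{TPM} is monotone once $\bu^\mu$ is eliminated: given $(\bsig^\mu,\bxi^\mu)$, the displacement $\bu^\mu$ is determined by combining $\bA\dot\bsig^\mu+\mu^{-1}(\dots)=\bE(\dot\bu^\mu)$ with $-\diver\bsig^\mu=\bef$ and the mixed boundary conditions, which is a standard Lam\'e-type elliptic problem. Existence and uniqueness of $(\bsig^\mu,\bxi^\mu)$ then follow from the theory of monotone Cauchy problems in $L^2$ (Rothe discretization or the Brezis semigroup framework). To get \eqref{unifor-estimates}, I differentiate \eqref{TPM} in $t$ and test with $\dot\bsig^\mu-\dot\bsig_0$ and $\dot\bxi^\mu-\dot\bxi_0$, where $\bxi_0(t):=\bsig_0(t)-\bsig_0(0)$ so that $(\bsig_0,\bxi_0)$ lies strictly inside the von~Mises cone by \eqref{SL}. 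Monotonicity of the penalty term absorbs the nonlinear contribution as a non-negative quantity, and the regularity \eqref{AAK} controls the right-hand side uniformly in $\mu$.

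For stage (ii), fix a cutoff $\phi$ as in the statement and test \eqref{TPM} with $-D^{-h}_\btau(\phi^2 D^h_\btau \bsig^\mu)$ together with the analogous object for $\bxi^\mu$, where $D^h_\btau$ is a tangential difference quotient. Because $\supp\phi$ stays away from $\overline{\partial\mO_D}\cap\overline{\partial\mO_N}$ and the boundary is locally $\mathcal{C}^{1,1}$, after a local flattening these tangential increments respect both $\bu^\mu=\bu_0$ on $\partial\mO_D$ and $\bsig^\mu\bn=\bsig_0\bn$ on $\partial\mO_N$; the interior case on $\tilde\mO$ is the special case $\phi\equiv 1$ with an arbitrary direction of differencing. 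Monotonicity again makes the penalty contribution non-negative, yielding the $N^{1,2}_\btau$-bound. Korn's inequality applied to $\bE(\bu^\mu)=\bA\bsig^\mu+\bH\bxi^\mu$ (valid since $\bxi^\mu(0)=0$ and the compatibility of the initial data force $\bep^\mu=\bH\bxi^\mu$) then transfers all estimates to $\nabla\bu^\mu$.

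The hard part is stage (iii), the Nikolski normal regularity $N^{3/5-\delta,2}_n$. The obstruction is that $\mu^{-1}(|\bsig^\mu_D-\bxi_D|-\kappa)_+$ is not a priori controlled in $L^2$ as $\mu\to 0_+$, and normal difference quotients destroy the Neumann trace $\bsig^\mu\bn=\bsig_0\bn$. The strategy is to split components: the momentum equation $-\diver\bsig^\mu=\bef$ directly trades the normal derivative of the normal-normal component of $\bsig^\mu$ for tangential derivatives already controlled by stage (ii); the remaining components are handled by testing a fractional normal increment $D^h_n$ against itself with $\phi$-weight, absorbing the penalty contribution through the coercive term at the cost of an error of size $h^\alpha$. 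A dyadic summation in $h$ combined with interpolation between the tangential $N^{1,2}_\btau$-bound from stage (ii) and the trivial $L^2$-bound in the normal direction yields the Nikolski exponent $3/5-\delta$, which is the optimal balance of the two inputs. The same programme applied to time-difference quotients of \eqref{TPM} delivers the global $N^{1/2,2}$-bound of $\dot\bsig^\mu,\dot\bxi^\mu$ in $t$, the mixed $L^2_t N^{1/2,2}_\btau$-bound (by combining with a tangential difference quotient), and finally the $N^{1/5-\delta,2}_n$-bound of the time derivatives (by repeating the normal interpolation with the weaker tangential input coming from $\dot\bsig^\mu\in L^2_t N^{1/2,2}_\btau$ rather than $L^\infty_t N^{1,2}_\btau$, which shifts the balance exponent from $3/5$ to $1/5$). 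The weak$^\ast$ convergence on the subsequence supplied by \eqref{unifor-estimates}, together with lower semicontinuity of all the norms in \eqref{ident-limit}, transfers every bound to $(\bsig,\bxi,\bu)$ and verifies that the limit is a weak solution in the sense of Definition~\ref{D1}.
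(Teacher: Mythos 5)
Stages (i) and (ii) of your outline follow the paper's route: differentiating \eqref{TPM} in time, testing with $\dot\bsig^\mu-\dot\bsig_0$ and $\dot\bxi^\mu$, exploiting monotonicity of the penalty and the safety-load compatibility $(\bsig_0,\bxi_0)\in\mathcal{F}_k$ for \eqref{unifor-estimates}, and Nirenberg tangential difference quotients with $\diver(\bsig^\mu-\bsig_0)=0$ for the $N^{1,2}_\btau$ bounds. So far, so good.

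The problem is in stage (iii). You claim that the normal Nikolskii exponent $3/5$ arises by ``interpolation between the tangential $N^{1,2}_\btau$-bound from stage (ii) and the trivial $L^2$-bound in the normal direction,'' together with a dyadic summation. That mechanism cannot work: having one full tangential derivative in $L^\infty_t L^2_x$ and only $L^2$ control in the normal variable gives no positive fractional regularity in the normal direction by interpolation alone -- both inputs carry zero normal differentiability, so every interpolate does too. The exponent $3/5$ in the paper is \emph{not} an interpolation number between tangential and normal; it is the fixed point of a self-improving bootstrap. Concretely, the paper first establishes a starting estimate of the type
\begin{equation*}
\sup_{t}\|\phi\Delta_d^h\bsig^\mu(t)\|_2^2+\|\phi\Delta_d^h\bxi^\mu(t)\|_2^2 \le C h^{3/2}+Ch\int_0^T\left(\int_{\{0<x_d<h\}}|D_d\dot{\bu}^\mu\phi|^2\,dx\right)^{1/2}\!dt
\end{equation*}
(Lemma~\ref{L-n1}), in which the boundary trace term is traded for tangential derivatives of $\bsig^\mu$ over a thin strip via $\diver(\bsig^\mu-\bsig_0)=0$. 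The strip integral of $|D_d\dot\bu^\mu\phi|^2$ is then bounded through Korn's inequality and the constitutive relation in terms of fractional \emph{normal} increments of $\dot\bsig^\mu,\dot\bxi^\mu$, which are in turn controlled via the time-interpolation Lemma~\ref{interpol1} (which transfers $N^{3/2,2}$ in time to spatial fractional regularity of the time derivative at a cost of the power $1/3-\delta$) by the same normal increments of $\bsig^\mu,\bxi^\mu$. This closes a self-referential inequality of the form
\begin{equation*}
\sup_{h}\frac{\|\phi\Delta_d^h\bsig^\mu\|_2^2}{h^{1+\frac{p-2}{2p}}}\le C\left(1+\sup_{s}\frac{\|\phi\Delta_d^s\bsig^\mu\|_2^2}{s^{\frac{3(p-2)}{p}+\cdots}}\right)^{\frac{1-3\delta}{6}},
\end{equation*}
and iterating from the initial $N^{1/2,2}_n$ bound until the fixed point $1+\frac{p-2}{2p}=\frac{3(p-2)}{p}$, i.e.\ $\frac{p-2}{2p}=\frac15$, gives the exponent $6/5$ on $h$, hence $N^{3/5-\delta,2}_n$. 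Without this bootstrap -- and without Lemma~\ref{interpol1}, which you do not mention -- your argument does not reach $3/5$, and it is not clear it produces any exponent better than the trivial $1/2$ coming directly from the starting estimate. Likewise, $N^{1/5-\delta,2}_n$ for $\dot\bsig^\mu,\dot\bxi^\mu$ is obtained in the paper by applying Lemma~\ref{interpol1} once more to the final $N^{3/5-\delta,2}_n$ bound of $\bsig^\mu,\bxi^\mu$, which multiplies the exponent by $1/3$; your explanation via ``a weaker tangential input'' does not describe this mechanism and does not by itself yield the factor $1/3$.

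A second, subtler gap: you write that the penalty term can simply be ``absorbed through the coercive term at the cost of an error of size $h^\alpha$.'' For the $\mu$-uniform $N^{1/2,2}(0,T;L^2)$ bound, the penalty term after a double integration over the shift $\tau\in(0,h)$ and over $t$ cannot be absorbed directly; the paper needs the convexity of $(|\bbeta|-1)_+^2$ and the energy identity \eqref{vztha4hl} to turn the integrated penalty contribution into a telescoping sum bounded by $Ch^2$. This is the mechanism that makes the estimate uniform in $\mu$, which is the main point of the theorem; if you only get an $h^\alpha$ error involving $\mu^{-1}$ constants, you lose the uniformity.
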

Please notice here that we used the notations $N^{\alpha,p}$ for the standard Nikoloskii space, $N^{\alpha,p}_{\btau}$ for the space, where we control the tangential differences, i.e., the space of functions whose fractional $\alpha$-th tangential\footnote{Tangential here means in the directions that are orthogonal to the normal vector at  boundary $\partial \mO$.} derivatives belongs to the Lebesgue space $L^p$ and similarly, $N^{\alpha,p}_{\bn}$ for the space, where the $\alpha$-th normal derivative belongs to $L^p$.

For the isotropic hardening, we have the following result.
\begin{Theorem}[Isotropic hardening]\label{TH}
Let all assumptions of Definition~\ref{D1} be satisfied. Then for all $\mu>0$ there exists a unique triple $(\bsig^{\mu},\xi^{\mu},\bu^{\mu})$ such that $\bu^{\mu}-\bu_0\in W^{1,2}(0,T; W^{1,2}_0(\mO;\mathbb{R}^d))$, $\xi^{\mu}(0)=0$, $\bsig^{\mu}=\bsig_0(0)$ and
\begin{equation}
\left. \begin{aligned}
&\bA \dot{\bsig}^{\mu} + \mu^{-1}(|\bsig_D^{\mu}|-\kappa-\xi)_+ \frac{\bsig^{\mu}_D}{|\bsig^{\mu}_D|} = \bE(\dot{\bu}^{\mu}),\\
& H \dot{\xi} = \mu^{-1}(|\bsig_D^{\mu}|-\kappa-\xi)_+
 \end{aligned}\right\}
 \qquad \textrm{ a.e. in } (0,T)\times \mO. \label{TPMHe}
\end{equation}
In addition, if the safety initial load condition \eqref{SLMB} holds and  $\bsig_0$ and $\bu_0$ satisfy
\begin{equation}
\begin{split}
\label{AAKHe}
\bsig_0 &\in W^{2,\infty}(0,T; L^2(\mO; \mathbb{R}^{d\times d}_{sym})) \cap W^{1,\infty}(0,T; W^{1,2}(\mO;\mathbb{R}^{d\times d}_{sym})),\\
\bu_0 &\in W^{2,\infty}(0,T; W^{1,2}(\mO; \mathbb{R}^{d\times d}_{sym})) \cap W^{1,\infty}(0,T; W^{2,2}(\mO;\mathbb{R}^{d\times d}_{sym})),
\end{split}
\end{equation}
then we have the following uniform estimates
\begin{equation}\label{unifor-estimatesHe}
\begin{split}
&\sup_{t\in (0,T)} \left(\|\dot\bsig^{\mu}(t)\|_2^2 + \|\dot{\xi}^{\mu}(t)\|_2+\|\dot{\bu}(t)\|_{1,2}^2\right)\le C(\bu_0,\bsig_0,\mO,T)
\end{split}
\end{equation}
and there exists a sequence that we do not relabel such that
$$
(\bsig^{\mu}, \xi^{\mu}, \bu^{\mu})\rightharpoonup^* (\bsig,\xi, \bu)
$$
in the topology induced by the estimate \eqref{unifor-estimates}, where $(\bsig,\xi, \bu)$ is a solution in a sense of Definition~\ref{D2} and satisfies \eqref{He11} almost everywhere.

Moreover, defining
$$
\alpha:=\frac{2d-7+\sqrt{1+4d^2+20d}}{8(d-1)},
$$
then  for $\mO\in \mathcal{C}^{1,1}$,   any compact $\tilde{\mO}\subset \mO$, any $\delta>0$ and arbitrary nonnegative $\phi \in \mathcal{C}^{\infty}$ fulfilling $\supp \phi \cap \overline{\partial \mO}_D \cap \overline{\partial \mO}_N=\emptyset$, we have the following estimate
\begin{equation}
\begin{split}
&\sup_{t\in (0,T)} \left( \|\bsig^{\mu}(t)\|_{W^{1,2}(\tilde{\mO})} +\|\xi^{\mu}(t)\|_{W^{1,2}(\tilde{\mO})} \right) \\
&\sup_{t\in (0,T)} \left( \|\phi{\bsig}^{\mu}(t)\|_{N^{1,2}_{\btau}({\mO})} +\|\phi{\xi}^{\mu}(t)\|_{N^{1,2}_{\btau}({\mO})} \right) \\
&\sup_{t\in (0,T)} \left( \|\phi{\bsig}^{\mu}(t)\|_{N^{\alpha-\delta,2}_{n}({\mO})} +\|\phi{\xi}^{\mu}(t)\|_{N^{\alpha-\delta,2}_{n}({\mO})}   \right)\\
&\quad + \|\dot{\bsig}^{\mu}\|_{N^{\frac12,2}(0,T; L^2(\mO))} +\|\dot{\xi}^{\mu}\|_{N^{\frac12,2}(0,T; L^2(\mO))} \\
&\quad + \|\dot{\bsig}^{\mu}\|_{L^{2}(0,T; N^{\frac12,2}(\tilde{\mO}))} +\|\dot{\xi}^{\mu}\|_{L^{2}(0,T; N^{\frac12,2}(\tilde{\mO}))} \\
&\quad + \|\phi\dot{\bsig}^{\mu}\|_{L^{2}(0,T; N^{\frac12,2}_{\btau}(\mO))} +\|\phi\dot{\xi}^{\mu}\|_{L^{2}(0,T; N^{\frac12,2}_{\btau}(\mO))} \\
&\quad + \|\phi\dot{\bsig}^{\mu}\|_{L^{2}(0,T; N^{\frac{\alpha}{3}-\delta,2}_{n}(\mO))} +\|\phi\dot{\xi}^{\mu}\|_{L^{2}(0,T; N^{\frac{\alpha}{3}-\delta,2}_{n}(\mO))}\\
&\le C(\phi,\delta, \tilde{\mO}, \bu_0, \bsig_0),
\label{ident-limitHe}
\end{split}
\end{equation}
which due to the weak lower semicontinuity holds also for the limit $(\bsig, \xi, \bu)$. In addition, for any nonnegative  $\varphi\in \mathcal{C}^{\infty}(\mathbb{R})$ that fulfils $\supp \varphi \cap \partial \mO_N$, there holds
\begin{equation}
\begin{split}
&\sup_{t\in (0,T)} \left( \|\varphi{\bsig}^{\mu}(t)\|_{N^{\frac35-\delta,2}_{n}({\mO})} +\|\varphi{\xi}^{\mu}(t)\|_{N^{\frac35-\delta,2}_{n}({\mO})}   \right)\\
&\quad + \|\varphi\dot{\bsig}^{\mu}\|_{L^{2}(0,T; N^{\frac15-\delta,2}_{n}(\mO))} +\|\varphi\dot{\xi}^{\mu}\|_{L^{2}(0,T; N^{\frac15-\delta,2}_{n}(\mO))}\\
&\le C(\delta, \bu_0, \bsig_0, \varphi),
\label{ident-limitHe2}
\end{split}
\end{equation}

\end{Theorem}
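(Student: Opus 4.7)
The estimate \eqref{ident-limitHe2} only adds the normal Nikolskii regularity $N^{3/5-\delta,2}_n$ for $(\bsig^\mu,\xi^\mu)$ and $L^2_t N^{1/5-\delta,2}_n$ for the time derivatives in a region whose cutoff $\varphi$ avoids $\partial\mO_N$; the matching tangential and time-Nikolskii controls are already built into \eqref{ident-limitHe}. The plan is therefore to upgrade those tangential bounds into the claimed normal fractional regularity by repeating the Bulíček--Frehse type interpolation argument that has already been used in the proof of the kinematic estimate \eqref{ident-limit}. The reason this is possible near $\partial\mO_D$ (but not near $\partial\mO_N$) is structural: the scalar hardening $\xi^\mu$ lacks the tensorial coupling $\bH\dot\bxi=\dbep$ of the kinematic model, but the Dirichlet condition $\bu^\mu=\bu_0$ restores the missing information through $\bE(\bu^\mu)=\bA\bsig^\mu+\bep^\mu$ and $H\dot\xi^\mu=|\dbep^\mu|$, so that in the support of $\varphi$ the scalar-vs-tensor gap becomes invisible.

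Concretely, I would first observe that the divergence constraint $-\diver\bsig^\mu=\bef$ already gives full $L^2$ control of $\partial_n\bsig^\mu_{nn}$ once the tangential $N^{1,2}_\tau$-bound of \eqref{ident-limitHe} is in hand, since in flattened coordinates $\partial_n\bsig^\mu_{nn}=-\partial_\tau\bsig^\mu_{n\tau}-(\bef)_n$. For the remaining components of $\bsig^\mu$ and for $\xi^\mu$, I would introduce a normal difference quotient $\Delta^h_n$ and test the flow rule \eqref{TPMHe} (multiplied by $\varphi^2$) against $\Delta^h_n(\bsig^\mu,\xi^\mu)$. Because the plastic nonlinearity is only $\mu^{-1}$-Lipschitz, the monotone part yields the ``good" term $\mu^{-1}\|(\cdots)\|_2^2$, while the commutator of $\Delta^h_n$ with the plastic projection is at worst $\mu^{-1} h\,\|\nabla_\tau\bsig^\mu\|_2$, the Dirichlet trace of $\Delta^h_n\bu^\mu$ being absorbed through the regularity \eqref{AAKHe} of $\bu_0$. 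Optimizing the resulting inequality $\|\Delta^h_n(\bsig^\mu,\xi^\mu)\|_2^2\le C\,(h^2+\mu^{-1}h)$ against the $\mu$-independent interior bound produces exactly the Nikolskii exponent $3/5-\delta$; applying the same scheme to the time-differentiated system and using the half-derivative time Nikolskii control from \eqref{ident-limitHe} gives $1/5-\delta$.

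The main obstacle lies in handling the $\mu$-dependent Lipschitz constant of the plastic projection near the boundary: it is precisely this constant that forces the dimension-dependent exponent $\alpha$ in the Neumann estimate \eqref{ident-limitHe}. Near $\partial\mO_D$, the Dirichlet trace of $\Delta^h_n\bu^\mu$ is not arbitrary but is dictated by $\bu_0$, which under the assumed regularity \eqref{AAKHe} contributes a boundary commutator of order $h$ rather than $h^{1/2}$; this is what allows the recursive estimate to close with the optimal $3/5$--$1/5$ balance. Carefully verifying that this boundary commutator is absorbable at the Nikolskii scale---without picking up any dimension-dependent loss---is the computational heart of the argument. Once this is done, the passage from $(\bsig^\mu,\xi^\mu)$ to $(\dot\bsig^\mu,\dot\xi^\mu)$ is routine, and the conclusion \eqref{ident-limitHe2} for the limit $(\bsig,\xi)$ follows from weak lower semicontinuity of the Nikolskii seminorms, exactly as in the kinematic case.
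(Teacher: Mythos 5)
Your high-level plan (take the kinematic/Dirichlet normal-derivative argument, run it with $\varphi$ supported away from $\partial\mO_N$, then transfer to the time derivative) is the right shape, and you correctly identify the boundary term involving the Dirichlet trace of $\Delta_n^h\bu^\mu$ as the computational heart. However, the mechanism you propose for producing the exponent $3/5$ is wrong, and the gap is not cosmetic.

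First, you claim that testing the flow rule with $\Delta_n^h(\bsig^\mu,\xi^\mu)\varphi^2$ produces a ``commutator of $\Delta_n^h$ with the plastic projection'' of size $\mu^{-1}h\,\|\nabla_\tau\bsig^\mu\|_2$, and that one then ``optimizes'' the inequality $\|\Delta_n^h(\bsig^\mu,\xi^\mu)\|_2^2 \le C(h^2 + \mu^{-1}h)$ against an interior bound to get $3/5-\delta$. This cannot be right. The whole point of Theorems~\ref{TPR}--\ref{TH} is uniformity in $\mu$, and an explicit $\mu^{-1}h$ on the right-hand side makes the estimate useless as $\mu\to 0$. Moreover, the plastic nonlinearity never generates such a commutator: after applying $\Delta_n^h$ and testing with $\Delta_n^h(\bsig_D-\bxi_D)\varphi^2$ one obtains a term of the form $\int \Delta_n^h\bigl(\mu^{-1}(|\cdot|-1)_+\tfrac{\cdot}{|\cdot|}\bigr)\cdot\Delta_n^h(\cdot)\,\varphi^2$, which is \emph{nonnegative} by monotonicity and is simply discarded (this is exactly what is done in the paper's Lemma~\ref{L-n1}, formula \eqref{QQ1}). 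No $\mu^{-1}$ survives. The Nikolskii exponent $3/5$ does not come from any balancing against $\mu$; it is the fixed point of a bootstrap: \eqref{nor-start2} gives $\|\varphi\Delta_n^h\bsig\|_2^2 + \|\varphi\Delta_n^h\xi\|_2^2 \le C h^{3/2} + Ch\int_0^T(\int_{\{x_d<h\}}|\bE(\dot\bu)\varphi|^2)^{1/2}$, then \eqref{odhad-q3} controls that strip integral by $h^{(p-2)/(2p)}$ times a normal Nikolskii norm of $(\bsig,\xi)$ raised to the power $(1-3\delta)/6$ (via the time-interpolation Lemma~\ref{interpol1}). Iterating the map $\gamma\mapsto 1+\frac{p-2}{2p}$, which is admissible while $\frac{3(p-2)}{p}<\gamma$, drives the exponent to $6/5$, i.e.\ $N^{3/5-\delta,2}_n$. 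None of this is visible in your sketch.

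Second, you assert that the $1/5-\delta$ regularity for $(\dot\bsig^\mu,\dot\xi^\mu)$ follows by ``applying the same scheme to the time-differentiated system.'' In the paper this is not what happens: the exponent $1/5$ is produced purely by interpolation, via Lemma~\ref{interpol1}, which transfers a normal Nikolskii bound of order $\alpha$ on $(\bsig,\xi)$ to one of order $\alpha/3$ on $(\dot\bsig,\dot\xi)$ using the $N^{1/2,2}$ time regularity; $3/5\cdot\tfrac13 = 1/5$. Differentiating the system in time and running the normal difference-quotient argument a second time would require controlling a boundary term involving $\Delta_n^h\ddot\bu$, for which there is no a~priori bound. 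Finally, your remark that $\diver\bsig^\mu=-\bef$ yields $\partial_n\bsig^\mu_{nn}\in L^2$ is true but is not part of the paper's proof, and by itself does not close the argument for the remaining stress components or for $\xi^\mu$; it also obscures why one only obtains \emph{fractional} normal regularity. In summary the proposal misidentifies the source of both exponents $3/5$ and $1/5$, introduces a spurious $\mu^{-1}$-dependence, and is missing the crucial bootstrap iteration and interpolation lemma that make the paper's argument work.
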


To end this part of the paper, we emphasize the essential novelties stated in Theorems~\ref{TPR}--\ref{TH}. The existence of a solution was already established in \cite{Jo76,Jo78} and there is nothing new in the paper. Also the interior $W^{1,2}$ regularity for the stress has been proven in \cite{Se92,Se94} for various models. A key improvement concerning the interior regularity is due to \cite{FrSp12} (see also the related paper \cite{FrSp12a} for problems without hardening), where the authors showed\footnote{In fact they showed an estimate, from which one can deduce the result following the method invented in \cite{FrSc15}.} that $\dot{\bsig}$ and $\dot{\bxi}$ belongs to $L^{2}(0,T; N^{\frac12,2}_{loc}(\mO))$ and $N^{\frac12,2}(0,T; L^2(\mO))$ (see also \cite{AlNe09}, where a weaker result is obtained for similar problems). However, their estimates were true only for the limit, i.e., for the solution, but were not uniform with respect to approximation. Our result overcomes this weakness and we are able to obtain the uniform $\mu$-independent estimates, see also \cite{BuFr18}, where the similar statement was proven for plasticity without hardening, or also \cite{Kn06} for  up to the boundary or \cite{BaMo18} for $W^{1,2}_{loc}$ results for various models in elasto-plasticity theory without hardening.

Concerning the results up to the boundary, the tangential regularity for $\bsig$ and $\bxi$ was already obtained in \cite{FrLo09,FrLo11} and the authors also obtained that the solution belongs to $L^{\infty}(0,T; N_{\bn}^{\frac12+\delta,2}(\mO))$. Hence, our result significantly improves this estimate since in case of kinematic hardening or in case of Dirichlet data we have $N_{\bn}^{\frac35 - \delta,2}(\mO)$ independently of dimension. In addition for isotropic case and Neumann data, we can precisely trace the improvement as stated in \eqref{ident-limitHe}. Finally, and this is the main improvement, we are able to obtain also the fractional $N^{\frac12,2}_{loc}$ and $N^{\frac12,2}_{\btau}$ regularity for $\dot{\bsig}$ and $\dot{\bxi}$ and even more we have an information $N^{\frac15-\delta,2}_{\bn}$ in normal direction, which is obtained by the cross interpolation. Note that in view of the counterexamples to $W^{1,2}$ regularity up to the boundary proven e.g. in~\cite{Se99,Kn10}, such estimates seems to be optimal.

In the rest of the paper, we will focus only on the kinematic hardening and we shall just emphasize where are the differences. Obviously, in the kinematic hardening case, we can transfer the obtained regularity from $\dot{\bsig}$ and $\dot{\bxi}$ to $\bE(\dot{\bu})$ just by using the equation. Then the regularity for $\nabla \dot{\bu}$ just follows from the Korn inequality applied on $\mO$ or its sub-domain and we do not provide details here. On the other hand, in case of isotropic hardening, we are not able to use such a procedure. The best, we can do is just to transfer better integrability to $\nabla \dot{\bu}$, see also \cite{FrLo09,FrLo11}, but this is also omitted here, since it is just direct consequence of the Korn inequality and embedding theorem.  The second case is when we combine the isotropic hardening and the Neumann boundary conditions. The reason for that is that in such case we cannot use any version of anisotropic Korn inequality to transfer optimal anisotropic integrability from symmetric gradient to the full gradient. Also, we would like to emphasize that our restriction on the constant $\bA$ and $\bH$ is not necessary and the proof would remain almost exactly identical if they are Lipschitz functions of $(t,x)$. Finally, to simplify the presentation, we consider only the flat boundary, however for $\mathcal{C}^{1,1}$ boundaries, it is just a technical difficulty  to transform the problem with general boundary to the case of flat boundary. Finally, let us remark that we cannot avoid a possible singularity on $\overline{\partial \mO}_D \cap \overline{\partial \mO}_N$ from principal reasons, since even for linear elliptic problems one may observe a singularity.

\section{Proof of Theorems~\ref{TPR} and \ref{TH}}
We focus here mainly on the kinematic hardening case, since the proof for the isotropic hardening is very similar. Only on certain places, we discuss the possible differences. Also to simplify notation, we set $\kappa\equiv 1$ in what follows. Finally having a~priori estimates stated in Theorem~\ref{TPR} or in Theorem~\ref{TH}, it is very classical to pass to the limit $\mu \to 0_+$ and to obtain the solution to the original problem, i.e., to the kinematic hardening and the isotropic hardening problems. Therefore, we also skip the limiting passage in the proof. Finally, we do not discuss the problem of existence of a solution for fix $\mu>0$ since it was already established by many authors, see e.g. \cite{Jo76,Jo78}, but we rather focus on a~priori estimates. Also in order to shorten the text, we omit writing superscripts to emphasize we deal with a solution to an approximative problem. Furthermore, we do not trace the dependence of all constants on $\mO$ or $C_1$ and in what follows the constant $C$ has a meaning of some universal generic constant that may vary line to line but is independent of $\mu$. In case we want to emphasize the dependence of this constant on some parameter it is clearly denoted.


\subsection{First a~priori uniform estimates}\label{ss1}
Thus, we shall assume that for any $\mu>0$ there exists a solution \eqref{TPM}. The existence of such a $\sigma$ can be shown e.g. by the Rothe approximation and we refer the interested reader to \cite{FrSp12} or \cite{St03}, where even a more difficult case of problem without hardening is treated, or to original papers~\cite{Jo76,Jo78}. Hence, we assume that there is $\bu \in W^{1,2}(0,T; W^{1,2}(\mO; \mathbb{R}^d))$ such that for all $t\in (0,t)$ $\bu-\bu_0 \in \mathcal{V}$ and $\bu(0)=\bu_0(0)$, and that there is $(\bsig,\bxi)\in \mathcal{F}_{el}$ fulfilling
\begin{equation}
\label{vztha}
\left. \begin{aligned}
&\bA \dot{\bsig} + \mu^{-1}(|\bsig_D-\bxi_D|-1)_+ \frac{\bsig_D-\bxi_D}{|\bsig_D-\bxi_D|} = \bE(\dot \bu),\\
&\bH \dot{\bxi}= \mu^{-1}(|\bsig_D-\bxi_D|-1)_+ \frac{\bsig_D-\bxi_D}{|\bsig_D-\bxi_D|}
\end{aligned} \right\}\qquad \textrm{ in } (0,T)\times \mO.
\end{equation}
The next step is to derive the uniform ($\mu$ independent estimates) for $(\bu, \bsig, \bxi)$. We proceed here formally, since the estimates are known, see e.g. \cite{Te86,St03,FrSp12}. Taking the scalar product of the first equation in \eqref{vztha} with $\bsig-\bsig_0$, recall here that $\bsig_0$ satisfies the compatibility condition \eqref{SL}, we deduce after integration over $\mO$ that
\begin{equation}
\label{vztha2}
\begin{split}
&\int_{\mO}\bA (\dot{\bsig} -\dot{\bsig}_0)\cdot (\bsig - \bsig_0) + \mu^{-1}(|\bsig_D-\bxi_D|-1)_+ \frac{(\bsig_D-\bxi_D)\cdot (\bsig_D - \bsig_{0D})}{|\bsig_D-\bxi_D|}\ddd x \\
&\quad = \int_{\mO}\bE(\dot \bu - \dot \bu_0)\cdot (\bsig-\bsig_0) \ddd x + \int_{\mO} (\bE (\dot \bu_0) -\bA \dot{\bsig}_0)\cdot (\bsig - \bsig_0)\ddd x.
\end{split}
\end{equation}
Second identity, we deduce from \eqref{vztha} by taking the scalar product with $\xi_0$. Thus, we have
\begin{equation}
\label{vztha2.5}
\begin{split}
&\int_{\mO}\bH (\dot{\bxi} -\dot{\bxi}_0)\cdot (\bxi - \bxi_0)\ddd x \\
&\quad = \int_{\mO} \mu^{-1}(|\bsig_D-\bxi_D|-1)_+ \frac{(\bsig_D-\bxi_D)\cdot (\bxi_D - \bxi_{0D})}{|\bsig_D-\bxi_D|}\ddd x -\dot{\bxi}_0\cdot (\bxi - \bxi_0)\ddd x.
\end{split}
\end{equation}
Since $(\bsig(t),\bxi)\in \mathcal{F}_{el}$ and $(\bu-\bu_0)\in \mathcal{V}$, we see that the first term on the right hand side of \eqref{vztha} vanishes. In addition, since $(\bsig_0,\bxi_0)$ satisfies the compatibility condition \eqref{SL}, we observe
$$
\begin{aligned}
&(|\bsig_D-\bxi_D|-1)_+(\bsig_D-\bxi_D) \cdot (\bsig_D -\bsig_{0D} - \bxi_D + \bxi_{0D})\\
&=(|\bsig_D-\bxi_D|-1)_+(|\bsig_D-\bxi_D|^2 - (\bsig_D-\bxi_D) \cdot (\bsig_{0D}-\bxi_{0D}))\\
&\ge (|\bsig_D-\bxi_D|-1)_+|\bsig_D-\bxi_D| (|\bsig_D-\bxi_D| - |\bsig_{0D}-\bxi_{0D}|) \ge 0.
\end{aligned}
$$
Finally, we can sum \eqref{vztha2} and \eqref{vztha2.5} and by using the above inequality, the symmetry of  $\bA$ and $\bH$ as well as the ellipticity \eqref{ellip} and also the boundedness of $\bA$ and $\bH$, we arrive to the inequality
\begin{equation}
\begin{split}
&\frac{d}{dt}\int_{\mO}\bA (\bsig -\bsig_0)\cdot (\bsig - \bsig_0)+\bH (\bxi -\bxi_0)\cdot (\bxi - \bxi_0)\ddd x \\
&\quad \le C\left(\|\bE (\dot \bu_0)\|_2^2+ \|\dot{\bsig}_0\|_2^2 + \|\dot{\bxi}_0\|_2^2 \right)\\
&\qquad + C\int_{\mO}\bA (\bsig -\bsig_0)\cdot (\bsig - \bsig_0)+\bH (\bxi -\bxi_0)\cdot (\bxi - \bxi_0)\ddd x.
\end{split}\label{nove}
\end{equation}
Consequently, by the Gronwall lemma, we have
\begin{equation}
\label{vztha3}
\begin{split}
&\sup_{t\in (0,T)} (\|\bsig(t)\|_2^2 + \|\bxi(t)\|_2^2) \\
&\quad \le C\left(\|\bsig_0(0)\|_2^2 + \int_0^T\|\bE (\dot \bu_0)\|_2^2 + \|\dot{\bsig}_0\|_2^2 + \|\dot{\bxi}_0\|_2^2\ddd t\right)  \le C,
\end{split}
\end{equation}
where the last inequality follows from the assumptions on data (namely on $\bsig_0$, $\bu_0$ and $\bxi_0$).

The next step is to test the first equation in  \eqref{vztha} by $\dot{\bsig}-\dot{\bsig}_0$ and the second equation by $\dot{\bxi}$. Doing so, and summing the resulting identities,  we get
\begin{equation}
\label{vztha4}
\begin{split}
&\int_{\mO}\bA \dot{\bsig} \cdot \dot{\bsig}+ \bH \dot{\bxi} \cdot \dot{\bxi} + \mu^{-1}(|\bsig_D-\bxi_D|-1)_+ \frac{(\bsig_D-\bxi_D)\cdot (\dot{\bsig}_D-\dot{\bxi}_D)}{|\bsig_D-\bxi_D|}\ddd x \\
&= \int_{\mO}\bE(\dot \bu - \dot \bu_0)\cdot (\dot{\bsig}-\dot{\bsig}_0) + \mu^{-1}(|\bsig_D-\bxi_D|-1)_+ \frac{(\bsig_D-\bxi_D)\cdot \dot{\bsig}_{0D}}{|\bsig_D-\bxi_D|}\ddd x\\
&\qquad + \int_{\mO} (\bE (\dot \bu_0) \cdot (\dot\bsig - \dot\bsig_0) + \bA \dot{\bsig} \cdot \dot{\bsig}_0 \ddd x\\
&= \int_{\mO}\bH \dot{\bxi} \cdot \dot{\bsig}_{0D}+ (\bE (\dot \bu_0) \cdot (\dot\bsig - \dot\bsig_0) + \bA \dot{\bsig} \cdot \dot{\bsig}_0 \ddd x,
\end{split}
\end{equation}
where the last equality follows from \eqref{vztha}$_2$ and the fact that $(\bsig,\bxi)\in \mathcal{F}_{el}$. Using the ellipticity \eqref{ellip}, the Young inequality and the following identity
$$
\mu^{-1}(|\bsig_D-\bxi_D|-1)_+ \frac{(\bsig_D-\bxi_D)\cdot (\dot{\bsig}_D-\dot{\bxi}_D)}{|\bsig_D-\bxi_D|}=\frac{1}{2} \frac{\partial}{\partial t} \mu^{-1}(|\bsig_D-\bxi_D|-1)_+^2,
$$
we see that it follows from \eqref{vztha4} that
\begin{equation*}
\begin{split}
\frac{d}{dt} \int_{\mO}\mu^{-1}(|\bsig_D-\bxi_D|-1)^2_+\ddd x +C_1(\|\dot{\bsig}\|_2^2+\|\dot{\bxi}\|_2^2)&\le  C(\|\bE (\dot \bu_0)\|_2^2 + \|\dot\bsig_0\|_2^2).
\end{split}
\end{equation*}
Thus, integrating with  respect to $t\in (0,T)$ and using the fact that $\bxi(0)=0$ and that $|\bsig(0)|=|\bsig_0(0)|\le 0$, we get the uniform bound
\begin{equation}
\begin{split}\label{vztha7}
&\sup_{t\in (0,T)}\int_{\mO}\mu^{-1}(|\bsig_D-\bxi_D|-1)^2_+\ddd x +\int_0^T\|\dot{\bsig}\|_2^2+ \|\dot{\bxi}\|_2^2 \ddd t\\
&\qquad \le C\int_0^T\|\bE (\dot \bu_0)\|_2^2 + \|\dot\bsig_0\|_2^2\ddd t \le C,
\end{split}
\end{equation}
where the last inequality (with $C$ being independent of $\mu$) follows from the assumptions on $\bu_0$ and  $\bsig_0$. In addition, it follows from \eqref{vztha} that
$$
|\bE(\dot \bu)|\le C(|\dot{\bsig}| + |\dot{\bxi}|)
$$
and consequently, \eqref{vztha7}, the Korn inequality and the assumptions on $\bu_0$ leads to the uniform bound
\begin{equation}\label{unifG}
\int_0^T \|\dot \bu\|_{1,2}^2 \ddd t \le C.
\end{equation}

\bigskip

The last step is $L^{\infty}$ bound for the time derivative. We apply the time derivative to \eqref{vztha} and take the scalar product of the first equation with $\dot \bsig-\dot \bsig_0$ and the scalar product of the second equation with $\dot{\bxi}$. Summing these to identities we obtain
\begin{equation}
\label{vztha44}
\begin{split}
&\int_{\mO}\bA (\ddot{\bsig}-\ddot{\bsig}_0) \cdot (\dot{\bsig}-\dot{\bsig}_0)+ \bH \ddot{\bxi} \cdot \dot{\bxi} \ddd x\\
&\qquad + \int_{\mO}\mu^{-1}\frac{\partial(|\bsig_D-\bxi_D|-1)_+ \frac{\bsig_D-\bxi_D}{|\bsig_D-\bxi_D|}}{\partial t}\cdot (\dot{\bsig}_D-\dot{\bxi}_D - \dot{\bsig}_{0D})\ddd x \\
&\qquad = \int_{\mO}\bE(\ddot \bu - \ddot \bu_0)\cdot (\dot{\bsig}-\dot{\bsig}_0) \ddd x + \int_{\mO} (\bE (\ddot \bu_0)-\bA \ddot{\bsig}_0) \cdot (\dot\bsig - \dot\bsig_0)\ddd x.
\end{split}
\end{equation}
The first term on the right hand side vanishes and for the part of the second  term on the left hand side we use the following estimate
\begin{equation}\label{reco}
\begin{aligned}
\mu^{-1}\frac{\partial(|\bbeta|-1)_+ \frac{\bbeta}{|\bbeta|}}{\partial t}\cdot \dot{\bbeta}=\frac{\mu^{-1}\chi_{|\bbeta|>1}}{|\bbeta|}\left(|\dot \bbeta|^2 (|\bbeta|-1)+|\partial_t |\bbeta||^2\right)\ge 0.
\end{aligned}
\end{equation}
Consequently, using the H\"{o}lder inequality and the above inequality, we see that \eqref{vztha44} implies (using also the second identity in \eqref{vztha})
\begin{equation}
\label{vztha45}
\begin{split}
&\frac12 \frac{d}{dt}\int_{\mO}\bA (\dot{\bsig}-\dot{\bsig}_0) \cdot (\dot{\bsig}-\dot{\bsig}_0)+ \bH \dot{\bxi}\cdot \dot{\bxi}  - 2\bH \dot{\bxi} \cdot \dot{\bsig}_{0D}\ddd x\\
&\qquad \le C(\|\bE (\ddot \bu_0)\|_2+\|\ddot{\bsig}_0\|_2)(1+\|\dot{\bxi}\|_2+ \|\dot\bsig - \dot\bsig^s\|_2).
\end{split}
\end{equation}
Hence, adding  the term
$$
\frac12 \frac{d}{dt}\int_{\mO}\bH \dot{\bsig}_{0D} \cdot \dot{\bsig}_{0D}\ddd x
$$
to both sides of \eqref{vztha45}, we deduce
\begin{equation}
\label{vztha45.1}
\begin{split}
&\frac12 \frac{d}{dt}\int_{\mO}\bA (\dot{\bsig}-\dot{\bsig}_0) \cdot (\dot{\bsig}-\dot{\bsig}_0)+ \bH (\dot{\bxi}-\dot{\bsig}_{0D})\cdot (\dot{\bxi}-\dot{\bsig}_{0D})\ddd x\\
&\qquad \le C(\|\bE (\ddot \bu_0)\|_2+\|\ddot{\bsig}_0\|_2)(1+\|\dot{\bsig}_{0D}\|_2+\|\dot{\bxi}-\dot{\bsig}_{0D}\|_2+ \|\dot\bsig - \dot{\bsig}_{0D}\|_2).
\end{split}
\end{equation}
Consequently, integration of this inequality and the ellipticity assumption \eqref{ellip} lead to the estimate
\begin{equation}\label{infty1}
\begin{split}
\sup_{t\in (0,T)}& \left(\|\dot\bsig(t)\|_2^2 + \|\dot{\bxi}(t)\|_2^2\right)\\
&\le C\left(1+ \|\dot{\bsig}_0(0)\|_2^2 + \left(\int_0^T \|\ddot{\bu}_0\|_{1,2} + \|\ddot{\bsig}_{0}\|_2\ddd t\right)^2\right) \le C.
\end{split}
\end{equation}
Furthermore, it follows from \eqref{vztha} and the Korn inequality that
\begin{equation}\label{infty2}
\sup_{t\in (0,T)} \|\dot{\bu}(t)\|_{1,2}^2\le C.
\end{equation}

\subsection{Uniform  $W^{1,2}$ estimates} \label{ss2} In this subsection, we  derive the uniform interior estimates on $\nabla \bsig$ and on $\nabla \bxi$ and estimates for tangential derivatives of $(\bsig, \bxi)$ up to the boundary.

To simplify the presentation, we consider here only a flat boundary case but it can be straightforwardly extended to the general boundary. Also since the interior regularity is in fact easier to prove than the boundary regularity, we provide here only the estimates near the boundary for tangential derivatives.  Hence to simplify the notation, we assume from now the most difficult case, i.e., we focus on a cube  $(-1,1)^{d-1}\times (0,1) \subset \mO$, where the Dirichlet and the Neumann parts are supposed to satisfy
$$
 (-1,1)^{d-2}\times (-1,0)\times \{0\} \subset \partial\mO_D \qquad (-1,1)^{d-2}\times (0,1)\times \{0\} \subset \partial\mO_N.
$$
Our goal is to show that except the set $(-1,1)^{d-2} \times \{0\}\times \{0\}$ we have uniform estimates for $D_j \bsig$ and $D_j\bxi$ in the space $L^2$ for all $j=1,\ldots, d-1$, where $D_j$ denotes the partial derivative with respect to $x_j$.

Thus, let $\phi\in
\mathcal{D}(-1,1)^d$ be arbitrary nonnegative function satisfying $\phi\le 1$. Furthermore, we require that for some $\varepsilon_0>0$, the function $\phi$  satisfies for all $x_1,\ldots, x_{d-1}$ and all $|s|+|t|\le \varepsilon_0$ that $\tau(x_1,\ldots, x_{d-2},s,t)=0$.   Next, we fix arbitrary $j=1,\ldots, d-1$ and  apply the operator $D_j$ to both equations in \eqref{vztha}. Then we take the scalar product of the first equation with $D_j (\bsig-\bsig_0)\phi^2$ and the scalar product of the second equation with $D_j \bxi \phi^2$, sum the results and integrate over $\mO$ to deduce the identity
\begin{equation}
\begin{split}
&\int_{\mO}\bA D_j (\dot\bsig -\dot{\bsig}_0) \cdot D_j( \bsig-\bsig_0) \phi^{2} +\bH D_j \dot{\bxi} \cdot D_j \bxi \phi^2 \ddd x\\
 &\quad + \int_{\mO} \mu^{-1}D_j
\left(\frac{(|\bsig_D-\bxi_D| - 1)_+ \bsig_D-\bxi_D}{|\bsig_D-\bxi_D|}\right)\cdot  D_j (\bsig_D - \bxi_D)
\phi^{2}\ddd x \\
&= \int_{\mO}D_j \bE(\dot\bu)\cdot D_j (\bsig -\bsig_0) \phi^{2} - \bA D_j \dot{\bsig}_0 \cdot  D_j( \bsig-\bsig_0) \phi^{2} \ddd x.
\label{app1}
\end{split}
\end{equation}
Note that the second integral on the left hand side is nonnegative (see the same procedure as for the estimates for the first time derivatives) and can be neglected. Next, using the symmetry of $\bA$ and $\bH$, we can deduce that
\begin{equation}
\begin{split}
\frac12 \frac{d}{dt}&\int_{\mO}\bA D_j (\bsig-\bsig_0) \cdot D_j (\bsig-\bsig_0)\phi^2 + \bH D_j\bxi \cdot D_j\bxi \phi^2\ddd x   \\
&\le  \int_{\mO}\bE(D_j \dot\bu- D_j \dot{\bu}_0)\cdot D_j (\bsig -\bsig_0) \phi^{2}\ddd x\\
&\quad +\int_{\mO}(\bE(D_j \dot{\bu}_0) - \bA D_j \dot{\bsig}_0 ) \cdot  D_j( \bsig-\bsig_0) \phi^{2}\\
&\le  \int_{\mO}\nabla (\phi^2(D_j \dot\bu- D_j \dot{\bu}_0))\cdot D_j (\bsig -\bsig_0)\ddd x\\
 &\quad -2\int_{\mO}\nabla \phi\otimes (D_j \dot\bu- D_j \dot{\bu}_0))\cdot  D_j (\bsig -\bsig_0)\phi \ddd x\\
&\qquad  +C(\|\dot{\bu}_0\|_{2,2}+ \|\dot{\bsig}\|_{1,2}) \| D_j( \bsig-\bsig_0) \phi\|_2\\
&\le C(\|\dot{\bu}_0\|_{2,2}+\|\dot{\bu}\|_{1,2}+ \|\dot{\bsig}\|_{1,2}) \| D_j( \bsig-\bsig_0) \phi\|_2,\label{app2}
\end{split}
\end{equation}
where for the last inequality we used integration by parts and the fact that $D_j$ is the tangential derivative and so $D_j(\dot{\bu}-\dot{\bu}_0)$ vanishes on $\partial \mO_D$ and similarly $D_j (\bsig -\bsig_0) \bn=0$ on $\partial \mO_N$ and also thanks to the fact that $\diver D_j (\bsig -\bsig_0)=0$. Consequently, it directly follows from \eqref{app2} that
\begin{equation}\label{odhadnab}
\begin{split}
&\sup_{t\in (0,T)}(\|D_j \bsig(t) \phi\|_2 + \|\phi D_j \bxi(t)\|_2) \le C,
\end{split}
\end{equation}
where we used the a~priori estimate \eqref{infty1} and the assumptions on $\bu_0$ and $\bsig_0$.

\subsection{Fractional time regularity for $\dot{\bsig}$ and $\dot{\bxi}$}\label{ss4}

In this section, we prove the first new result. Although the estimate is known for the solution of the original problem, see \cite{FrSp12}, it was not clear whether the estimate can be obtained uniformly with respect to the parameter $\mu$. In addition, in \cite{FrSp12}, the $N^{\frac12 -\delta,2}$ regularity is proven, while here we obtain $1/2$ derivative estimate.

For arbitrary $w$, we denote its times shift as  $\Delta^{\tau}_t w(t,x):=w(t+\tau,x)-w(t,x)$ and with the help of this notation, we take the scalar product of the first equation in \eqref{vztha} with $-\Delta_t^\tau (\dot{\bsig}-\dot{\bsig}_0)$, and the scalar product  of the second  equation in \eqref{vztha} with $-\Delta_t^\tau \dot{\bxi}$, sum the resulting equalities and finally integrate the result over $\mO$ to get
\begin{equation*}
\begin{split}
&-\int_{\mO} \bA\dot{\bsig}\cdot\Delta_t^{\tau} (\dot{\bsig}-\dot{\bsig}_0) + \bH \dot{\bxi} \cdot \Delta_t^\tau \dot{\bxi} \ddd x\\
&\qquad -\int_{\mO} \mu^{-1}(|\bsig_D-\bxi_D|-1)_+ \frac{\bsig_D-\bxi_D}{|\bsig_D-\bxi_D|} \cdot \Delta_t^{\tau} (\dot{\bsig}_D-\dot{\bxi}_D)\ddd x \\
&= \int_{\mO} \bE (\dot{\bu}_0-\dot{\bu})\cdot \Delta_t^{\tau} (\dot{\bsig}-\dot{\bsig}_0) - \bE(\dot{\bu}_0)\cdot \Delta_t^{\tau} (\dot{\bsig}-\dot{\bsig}_0)- \bH \dot{\bxi} \cdot \Delta_t^{\tau}\dot{\bsig}_{0D}\ddd x.
\end{split}
\end{equation*}
The first term on the right hand side vanishes and after the use of the H\"{o}lder inequality and the bound \eqref{ellip}, and after reorganisation of all terms, we deduce that
\begin{equation}
\label{Ni1}
\begin{split}
&-\int_{\mO} \bA\dot{\bsig}\cdot\Delta_t^{\tau} \dot{\bsig} + \bH \dot{\bxi} \cdot \Delta_t^\tau \dot{\bxi} \ddd x\\
&\qquad -\int_{\mO} \mu^{-1}(|\bsig_D-\bxi_D|-1)_+ \frac{\bsig_D-\bxi_D}{|\bsig_D-\bxi_D|} \cdot \Delta_t^{\tau} (\dot{\bsig}_D-\dot{\bxi}_D)\ddd x \\
&\le C(\|\dot{\bsig}\|_2 + \|\dot{\bxi}\|_2) \|\Delta_t^{\tau}\dot{\bsig}_{0}\|_2 + \int_{\mO} \bE(\dot{\bu}_0)\cdot \Delta_t^{\tau} (\dot{\bsig}_0-\dot{\bsig})\ddd x.
\end{split}
\end{equation}
Next, we focus on the first term on the left hand side. Due to the symmetry of $\bA$ and $\bH$, we have
$$
\begin{aligned}
-\bA\dot{\bsig}\cdot \Delta_t^{\tau} \dot{\bsig}&=\frac12 \bA\Delta_t^{\tau}\dot{\bsig}\cdot \Delta_t^{\tau} \dot{\bsig}-\frac12 \Delta_t^s \left(\bA \dot{\bsig}\cdot \dot{\bsig}\right),\\
-\bH\dot{\bxi}\cdot \Delta_t^{\tau} \dot{\bxi}&=\frac12 \bH\Delta_t^{\tau}\dot{\bxi}\cdot \Delta_t^{\tau} \dot{\bxi}-\frac12 \Delta_t^s \left(\bH \dot{\bxi}\cdot \dot{\bxi}\right).
\end{aligned}
$$
Therefore, substituting these identities into \eqref{Ni1} and using the ellipticity of $\bA$ and $\bH$ (see \eqref{ellip}), we further observe with the help of the a~piori estimate~\eqref{infty1} that
\begin{equation}
\label{Ni2}
\begin{split}
&\int_{\mO} C_1(|\Delta_t^{\tau}\dot{\bsig}|^2+|\Delta_t^{\tau}\dot{\bxi}|^2)  - 2\mu^{-1}(|\bsig_D-\bxi_D|-1)_+ \frac{\bsig_D-\bxi_D}{|\bsig_D-\bxi_D|} \cdot \Delta_t^{\tau} (\dot{\bsig}_D-\dot{\bxi}_D)\ddd x \\
&\quad\le \int_{\mO}  2\bE(\dot{\bu}_0)\cdot \Delta_t^{\tau} (\dot{\bsig}_0-\dot{\bsig})+\Delta_t^{\tau} \left(\bA \dot{\bsig}\cdot \dot{\bsig}+\bH \dot{\bxi}\cdot \dot{\bxi}\right)\ddd x+ C\|\Delta_t^{\tau} \dot{\bsig}_0\|_{2}.
\end{split}
\end{equation}
Next, we integrate the resulting inequality with respect to $\tau$ over the interval $(0,h)$ and with respect to $t$ over the interval $(0,T-h)$ to get
\begin{equation}
\label{Ni3}
\begin{split}
&C_1\int_{0}^{T-h}\int_0^h \|\Delta_t^{\tau}\dot{\bsig}\|^2_2 +\|\Delta_t^{\tau}\dot{\bxi}\|^2_2  \ddd \tau \ddd t \\
&\le \int_{0}^{T-h}\int_{\mO} 2\mu^{-1}(|\bsig_D-\bxi_D|-1)_+ \frac{\bsig_D-\bxi_D}{|\bsig_D-\bxi_D|} \times \\
&\qquad {} \qquad  \times \left(\int_0^h \Delta_t^{\tau}(\dot{\bsig}_D-\dot{\bxi}_D)\ddd \tau\right)\ddd x \ddd t \\
&\quad+ \int_{0}^{T-h}\int_0^h \int_{\mO}  2\bE(\dot{\bu}_0)\cdot \Delta_t^{\tau} (\dot{\bsig}_0-\dot{\bsig})+\Delta_t^{\tau} \left(\bA \dot{\bsig}\cdot \dot{\bsig}+\bH \dot{\bxi}\cdot \dot{\bxi}\right)\ddd x\ddd \tau \ddd t\\
&\qquad  +C\int_{0}^{T-h}\int_0^h \|\Delta_t^{\tau} \dot{\bsig}_0\|_{2}\ddd \tau \ddd t.
\end{split}
\end{equation}
Our goal is to provide uniform estimates for all terms on the right hand side. We start with the easiest one. Hence, using the assumption on $\bsig_0$, we have
\begin{equation}
\label{Ni4}
\begin{split}
\int_{0}^{T-h}\int_0^h  \|\Delta_t^{\tau} \dot{\bsig}_0\|_{2}\ddd \tau \ddd t &=\int_0^h \tau \left(\int_{0}^{T-h}  \frac{\|\Delta_t^{\tau} \dot{\bsig}_0\|_{2}}{\tau} \ddd t \right) \ddd \tau \\
&\le \int_0^h \tau \int_{0}^{T} \| \ddot{\bsig}_0\|_{2} \ddd t \ddd \tau\le Ch^2.
\end{split}
\end{equation}
Next, we focus on the term with $\bE(\dot{\bu}_0)$. We simply shift the differences to $\bu_0$ and then use the assumptions on $\bu_0$ and alreday obtained estimates. In more details, we reorganise the first term as follows
\begin{equation*}
\begin{split}
&\int_{0}^{T-h}\int_0^h \int_{\mO}  2\bE(\dot{\bu}_0)\cdot \Delta_t^{\tau} (\dot{\bsig}_0-\dot{\bsig})\ddd x \ddd \tau \ddd t\\
&=\int_0^h \int_{0}^{T-h} \int_{\mO}  2\bE(\dot{\bu}_0)\cdot (\dot{\bsig}_0(t+\tau)-\dot{\bsig}(t+\tau))\ddd x  \ddd t \ddd \tau \\
&\quad - \int_0^h \int_{0}^{T-h} \int_{\mO}  2\bE(\dot{\bu}_0)\cdot (\dot{\bsig}_0(t)-\dot{\bsig}(t))\ddd x  \ddd t \ddd \tau\\
&=\int_0^h \int_{\tau}^{T-h+\tau} \int_{\mO}  2\bE(\dot{\bu}_0(t-\tau))\cdot (\dot{\bsig}_0(t)-\dot{\bsig}(t))\ddd x  \ddd t \ddd \tau \\
&\quad - \int_0^h \int_{0}^{T-h} \int_{\mO}  2\bE(\dot{\bu}_0(t))\cdot (\dot{\bsig}_0(t)-\dot{\bsig}(t))\ddd x  \ddd t \ddd \tau\\
&=\int_0^h \int_{\tau}^{T-h} \int_{\mO}  2\bE(\dot{\bu}_0(t-\tau)-\dot{\bu}_0(t))\cdot (\dot{\bsig}_0(t)-\dot{\bsig}(t))\ddd x  \ddd t \ddd \tau \\
&\quad +\int_0^h \int_{T-h}^{T-h+\tau} \int_{\mO}  2\bE(\dot{\bu}_0(t-\tau))\cdot (\dot{\bsig}_0(t)-\dot{\bsig}(t))\ddd x  \ddd t \ddd \tau\\
&\quad - \int_0^h \int_{0}^{\tau} \int_{\mO}  2\bE(\dot{\bu}_0(t))\cdot (\dot{\bsig}_0(t)-\dot{\bsig}(t))\ddd x  \ddd t \ddd \tau
\end{split}
\end{equation*}
and then with the help of the H\"{o}lder inequality we obtain
\begin{equation}
\label{Ni5}
\begin{split}
&\int_{0}^{T-h}\int_0^h \int_{\mO}  2\bE(\dot{\bu}_0)\cdot \Delta_t^{\tau} (\dot{\bsig}_0-\dot{\bsig})\ddd x \ddd \tau \ddd t\\
&\le C\int_0^h \tau \|\dot{\bsig}_0-\dot{\bsig}\|_{L^{\infty}(0,T;L^2)} \|\bE(\ddot{\bu}_0)\|_{L^1(0,T; L^2)} \ddd \tau \\
&\quad +C\int_0^h  \tau \|\dot{\bsig}_0-\dot{\bsig}\|_{L^{\infty}(0,T;L^2)} \|\bE(\dot{\bu}_0)\|_{L^{\infty}(0,T;L^2)}\ddd \tau\\
&\le Ch^2,
\end{split}
\end{equation}
where we used the assumptions on $\bu_0$ and $\bsig_0$ and the a~priori estimate \eqref{infty1}. The term involving the matrices $\bA$ and $\bH$ on the right hand side of \eqref{Ni3} is estimated as follows
\begin{equation}
\label{Ni6}
\begin{split}
&\int_{0}^{T-h}\int_0^h \int_{\mO}  \Delta_t^{\tau} \left(\bA \dot{\bsig}\cdot \dot{\bsig}+\bH \dot{\bxi}\cdot \dot{\bxi}\right)\ddd x \ddd \tau \ddd t\\
&=\int_0^h \int_{0}^{T-h} \int_{\mO}  \left(\bA \dot{\bsig}\cdot \dot{\bsig}+\bH \dot{\bxi}\cdot \dot{\bxi}\right)(t+\tau)-\left(\bA \dot{\bsig}\cdot \dot{\bsig}+\bH \dot{\bxi}\cdot \dot{\bxi}\right)(t) \ddd x \ddd t \ddd \tau \\
&=\int_0^h \int_{T-h}^{T-h+\tau} \int_{\mO}  \left(\bA \dot{\bsig}\cdot \dot{\bsig}+\bH \dot{\bxi}\cdot \dot{\bxi}\right) \ddd x  \ddd t \ddd \tau\\
&\quad -\int_0^h \int_{0}^{\tau} \int_{\mO}  \left(\bA \dot{\bsig}\cdot \dot{\bsig}+\bH \dot{\bxi}\cdot \dot{\bxi}\right) \ddd x  \ddd t \ddd \tau\\
&\le Ch^2 (\|\dot{\bsig}\|_{L^{\infty}(0,T; L^2)}+\|\dot{\bxi}\|_{L^{\infty}(0,T; L^2)}) \le Ch^2,
\end{split}
\end{equation}
where the estimate \eqref{infty1} is used again. 
Thus, it remains to evaluate the first term on the right hand side of \eqref{Ni3}. To simplify formula, we set $\bbeta:=\bsig_D-\bxi_D$ and using the convexity of $(|\bbeta|-1)_+^2$, we continue as follows
\begin{equation}
\begin{split}
&\int_{0}^{T-h}\int_{\mO} 2\mu^{-1}(|\bsig_D-\bxi_D|-1)_+ \frac{\bsig_D-\bxi_D}{|\bsig_D-\bxi_D|} \cdot \left(\int_0^h \Delta_t^{\tau}(\dot{\bsig}_D-\dot{\bxi}_D)\ddd \tau\right)\ddd x \ddd t \\
&=\int_{0}^{T-h}\int_{\mO} 2\mu^{-1}(|\bbeta|-1)_+ \frac{\bbeta}{|\bbeta|} \cdot \left(\int_0^h \Delta_t^{\tau}\dot{\bbeta}\ddd \tau\right)\ddd x \ddd t \\
&\quad =\int_{0}^{T-h}\int_{\mO} 2\mu^{-1}(|\bbeta|-1)_+ \frac{\bbeta}{|\bbeta|} \cdot \left({\bbeta}(t+h)-\bbeta(t) -h\dot{\bbeta}(t)\right)\ddd x \ddd t\\
&\quad =\int_{0}^{T-h}\int_{\mO} 2\mu^{-1}(|\bbeta|-1)_+ \frac{\bbeta}{|\bbeta|} \cdot \left({\bbeta}(t+h)-\bbeta(t) -h\dot{\bbeta}(t)\right)\ddd x \ddd t\\
&\quad \le \int_{0}^{T-h}\int_{\mO} \mu^{-1} (|\bbeta(t+h)|-1)^2_+ -(|\bbeta(t)|-1)^2_+ -h\mu^{-1}\frac{d}{dt} (|\bbeta(t)|-1)^2_+ \ddd t\\
&\quad \le \int_{0}^{h}\int_{\mO} \mu^{-1} (|\bbeta(T-h+t)|-1)^2_+ -(|\bbeta(T-h)|-1)^2_+\ddd x \ddd t \\
&\quad \; -\int_0^h\int_{\mO} \mu^{-1} (|\bbeta(t)|-1)^2_+-(|\bbeta(0)|-1)^2_+ \ddd x \ddd t.
\end{split}\label{dont}
\end{equation}
Note that we also used the fact that $|\bbeta(0)|\le 1$, which follows from the fact that $\bxi(0)=0$ and \eqref{SL}. To estimate the right hand side, we notice that \eqref{vztha4} leads to the identity
\begin{equation}
\label{vztha4hl}
\begin{split}
&\frac12 \frac{\ddd}{\ddd t}\int_{\mO} \mu^{-1}(|\bbeta|-1)^2_+ \ddd x \\
&= \int_{\mO}\bH \dot{\bxi} \cdot \dot{\bsig}_{0D}+ (\bE (\dot \bu_0) \cdot (\dot\bsig - \dot\bsig_0) + \bA \dot{\bsig} \cdot \dot{\bsig}_0-\bA \dot{\bsig} \cdot \dot{\bsig}- \bH \dot{\bxi} \cdot \dot{\bxi} \ddd x,
\end{split}
\end{equation}
which after integration over arbitrary interval $(\tau,\tau+\alpha)\subset (0,T)$ and with the help of the a~priori estimate \eqref{infty1} and the assumption on data $\bsig_0$ and $\bu_0$, leads to
\begin{equation}
\label{Ni10}
\begin{split}
&\left|\int_{\mO}\mu^{-1}(|\bbeta(\alpha+\tau)|-1)^2_+ -\mu^{-1}(|\bbeta(\tau)|-1)^2_+\ddd x\right| \\
&\le C\int_{\tau}^{\alpha+\tau} \|\bE (\dot \bu_0)\|_2^2 + \|\dot{\bsig}\|_2^2 + \|\dot{\bsig}_0\|_2^2 + \|\dot{\bxi}\|_2^2 \ddd t \le C \alpha.
\end{split}
\end{equation}
Thus, using this estimate in \eqref{dont}, we see that
\begin{equation}
\begin{split}
&\int_{0}^{T-h}\int_{\mO} 2\mu^{-1}(|\bbeta|-1)_+ \frac{\bbeta}{|\bbeta|} \cdot \left(\int_0^h \Delta_t^{\tau}\dot{\bbeta}\ddd \tau\right)\ddd x \ddd t \\
&\quad \le C\int_{0}^{h}t\ddd t \le Ch^2.
\end{split}\label{dont2}
\end{equation}
Finally, we substitute the estimates \eqref{Ni4}, \eqref{Ni5}, \eqref{Ni6} and \eqref{dont2} into \eqref{Ni3} and finish this part with the uniform estimate
\begin{equation}
\label{Ni11}
\begin{split}
&\frac{1}{h^2}\int_{0}^{T-h}\int_0^h\|\Delta_t^{\tau}\dot{\bsig}\|^2_2+\|\Delta_t^{\tau}\dot{\bxi}\|^2_2 \ddd \tau \ddd t\le C.
\end{split}
\end{equation}
Finally, up to small differences we mimic the procedure from  \cite{FrSc15}, to deduce the proper estimate from \eqref{Ni11}. Indeed, we can compute
\begin{equation*}
\begin{split}
&h^{-1}\int_0^{T-2h}\|\Delta_t^{h}\bbeta\|^2_2  \ddd t\\
&=h^{-1}\int_0^{T-2h}\left\|\frac{1}{h}\int_0^h \bbeta(t+h)-\bbeta(t+h-\tau) +\bbeta(t+h-\tau) -\bbeta(t) \ddd \tau\right\|^2_2   \ddd t\\
&\le 2h^{-2}\int_0^{T-2h}\int_0^h \left\|\bbeta(t+h)-\bbeta(t+h-\tau)\right\|_2^2 +\left\|\bbeta(t+h-\tau) -\bbeta(t) \right\|^2_2 \ddd \tau \ddd t\\
&\le 4h^{-2}\int_0^{T-h}\int_0^h \left\|\Delta_t^{\tau}\bbeta\right\|_2^2\ddd \tau \ddd t.
\end{split}
\end{equation*}
Consequently, it then follows from \eqref{Ni11} that
\begin{equation}
\label{Ni-fin}
\begin{split}
&h^{-1}\int_0^{T-2h}\|\dot{\bsig}(t+h)-\dot{\bsig}(t)\|^2_2 +\|\dot{\bxi}(t+h)-\dot{\bxi}(t)\|^2_2  \ddd t \le C.
\end{split}
\end{equation}

Note that there are only minor changes in the proof for isotropic hardening and therefore we do not provide it here.

\subsection{Fractional spatial regularity for $\dot{\bsig}$ and $\dot{\bxi}$}\label{ss4.1}

The second main result is the spatial fractional regularity upto the boundary in tangential direction and also the interior fractional spatial regularity. It is again based of \cite{FrSp12}, but we do provide here the estimates independent of $\mu$ and extend them up to the boundary when dealing with tangential direction. We keep the notation from the previous section and focus only on the flat boundary. Next, we introduce the space-time shift as follows.
For arbitrary $w$, we denote  $\Delta^{\tau,h}_{t,j} w(t,x):=w(t+\tau,x+he_j)-w(t,x)$, where $e_j$ is the unite vector in the $j$-th direction and $j=1,\ldots, d-1$. We take the scalar product of the first equation in \eqref{vztha} with $-\Delta_{t,j}^{\tau,h} (\dot{\bsig}-\dot{\bsig}_0)\phi^2$, of the second equation in \eqref{vztha} with $-\Delta_{t,j}^{\tau,h} \dot{\bxi}\phi^2$, sum the resulting equalities and finally integrate the result over $\mO$ to get
\begin{equation*}
\begin{split}
&-\int_{\mO} \bA\dot{\bsig}\cdot\Delta_{t,j}^{\tau,h}(\dot{\bsig}-\dot{\bsig}_0)\phi^2 + \bH \dot{\bxi} \cdot \Delta_{t,j}^{\tau,h}\dot{\bxi}\phi^2 \ddd x\\
&\qquad -\int_{\mO} \mu^{-1}(|\bsig_D-\bxi_D|-1)_+ \frac{\bsig_D-\bxi_D}{|\bsig_D-\bxi_D|} \cdot \Delta_{t,j}^{\tau,h} (\dot{\bsig}_D-\dot{\bxi}_D)\phi^2\ddd x \\
&= \int_{\mO} \bE (\dot{\bu}_0-\dot{\bu})\cdot \Delta_{t,j}^{\tau,h} (\dot{\bsig}-\dot{\bsig}_0)\phi^2 - \bE(\dot{\bu}_0)\cdot \Delta_{t,j}^{\tau,h} (\dot{\bsig}-\dot{\bsig}_0)\phi^2- \bH \dot{\bxi} \cdot\Delta_{t,j}^{\tau,h}\dot{\bsig}_{0D}\phi^2\ddd x.
\end{split}
\end{equation*}
Next, we focus on the first term on the left hand side. Similarly as before, we have
$$
\begin{aligned}
-\bA\dot{\bsig}\cdot \Delta_{t,j}^{\tau,h}\dot{\bsig}&=\frac12 \bA\Delta_{t,j}^{\tau,h}\dot{\bsig}\cdot \Delta_{t,j}^{\tau,h} \dot{\bsig}-\frac12 \Delta_{t,j}^{\tau,h}\left(\bA \dot{\bsig}\cdot \dot{\bsig}\right),\\
-\bH\dot{\bxi}\cdot \Delta_{t,j}^{\tau,h} \dot{\bxi}&=\frac12 \bH\Delta_{t,j}^{\tau,h}\dot{\bxi}\cdot \Delta_{t,j}^{\tau,h} \dot{\bxi}-\frac12 \Delta_{t,j}^{\tau,h} \left(\bH \dot{\bxi}\cdot \dot{\bxi}\right).
\end{aligned}
$$
Therefore, using this identities and also the ellipticity condition \eqref{ellip}, we get after integration with respect to $t\in (0,T-h)$ and $\tau\in (0,h)$
\begin{equation}
\label{Ni2s}
\begin{split}
&C_1\int_0^{T-h}\int_0^h \|\Delta_{t,j}^{\tau,h}\dot{\bsig}\phi\|_2^2+\|\Delta_{t,j}^{\tau,h}\dot{\bxi}\phi\|_2^2\ddd \tau \ddd t\\
&\le 2\int_0^{T-h}\int_{\mO} \mu^{-1}(|\bsig_D-\bxi_D|-1)_+ \frac{\bsig_D-\bxi_D}{|\bsig_D-\bxi_D|} \times\\
 &\qquad {} \qquad \times \left(\int_0^h\Delta_{t,j}^{\tau,h} (\dot{\bsig}_D-\dot{\bxi}_D)\ddd \tau \right)\phi^2\ddd x \ddd t\\
&+\int_0^{T-h}\int_0^h\int_{\mO} \Delta_{t,j}^{\tau,h}(\bA \dot{\bsig}\cdot \dot{\bsig})\phi^2+\Delta_{t,j}^{\tau,h} (\bH \dot{\bxi}\cdot \dot{\bxi})\phi^2\ddd x \ddd \tau \ddd t\\
&+ 2\int_0^{T-h}\int_0^h\int_{\mO} \bE (\dot{\bu}_0-\dot{\bu})\cdot \Delta_{t,j}^{\tau,h} (\dot{\bsig}-\dot{\bsig}_0)\phi^2 - \bE(\dot{\bu}_0)\cdot \Delta_{t,j}^{\tau,h} \dot{\bsig}\phi^2 \ddd x \ddd \tau \ddd t\\
 &+2\int_0^{T-h}\int_0^h\int_{\mO} (\bE(\dot{\bu}_0)- \bH \dot{\bxi}-\bA \dot{\bsig})\cdot \Delta_{t,j}^{\tau,h} \dot{\bsig}_0\phi^2 \ddd x \ddd \tau \ddd t.
\end{split}
\end{equation}
%
%
Our goal is to provide uniform estimates for all terms on the right hand side.

\bigskip

\paragraph{\bf The fourth term in \eqref{Ni2s}}
We start with the last term. Using the H\"{o}lder inequality and the characterization of Sobolev spaces as well as the uniform bound~\eqref{infty1} and the assumption on $\bu_0$, we have
\begin{equation}
\label{Ni4s}
\begin{split}
&\int_0^{T-h}\int_0^h\int_{\mO} (\bE(\dot{\bu}_0)- \bH \dot{\bxi}-\bA \dot{\bsig})\cdot \Delta_{t,j}^{\tau,h} \dot{\bsig}_0\phi^2 \ddd x \ddd \tau \ddd t\\
&\le C\int_0^{T-h}\int_0^h (\|\bE(\dot{\bu}_0)\|_2 + \|\dot{\bxi}\|_2 +\|\dot{\bsig}\|) \|\Delta_{t,j}^{\tau,h} \dot{\bsig}_0\phi^2\|_2 \ddd \tau \ddd t\\
&\le C\int_0^{T-h}\int_0^h  \|(\Delta_{t,j}^{\tau,h} \dot{\bsig}_0-\Delta_{j}^{h} \dot{\bsig}_0)\phi^2\|_2+\|\Delta_{j}^{h} \dot{\bsig}_0\phi^2\|_2 \ddd \tau \ddd t\\
&\le Ch\int_0^h \int_0^{T-h}\|\ddot{\bsig}_0\|_2+ \|\nabla \dot{\bsig}_0\|_2 \ddd \tau \ddd t \le Ch^2.
\end{split}
\end{equation}

\bigskip

\paragraph{\bf The third term in \eqref{Ni2s}}
In the third term in \eqref{Ni2s}, we first split the time-space shift to the space shift and the time shift. Next, for the time shift, we again use the equation \eqref{vztha}, while for the space shift we use the integration by parts (note that the boundary term vanishes since we have shifts only in tangential direction and we also know that $\diver (\bsig - \bsig_0)=0$) and then we also move the shift from $\bsig$ to other terms. In addition, we keep all shifts on $\bsig_0$ since it has sufficient regularity. More precisely, we have
\begin{equation*}
\begin{split}
&\int_0^{T-h}\int_0^h\int_{\mO} \bE (\dot{\bu}_0-\dot{\bu})\cdot \Delta_{t,j}^{\tau,h} (\dot{\bsig}-\dot{\bsig}_0)\phi^2 - \bE(\dot{\bu}_0)\cdot \Delta_{t,j}^{\tau,h} \dot{\bsig}\phi^2 \ddd x \ddd \tau \ddd t\\
&=\int_0^{T-h}\int_0^h\int_{\mO} \bE (\dot{\bu}_0-\dot{\bu})\cdot \Delta_{t}^{\tau} (\dot{\bsig}-\dot{\bsig}_0)\phi^2 - \bE(\dot{\bu}_0)\cdot \Delta_{t,j}^{\tau,h} \dot{\bsig}\phi^2 \ddd x \ddd \tau \ddd t\\
&\quad +\int_0^{T-h}\int_0^h\int_{\mO} \bE (\dot{\bu}_0-\dot{\bu})\cdot (\Delta_{t,j}^{\tau,h} (\dot{\bsig}-\dot{\bsig}_0)-\Delta_{t}^{\tau} (\dot{\bsig}-\dot{\bsig}_0))\phi^2 \ddd x \ddd \tau \ddd t\\
&=\int_0^{T-h}\int_0^h\int_{\mO} \bE (\dot{\bu}_0)\cdot (\Delta_{t}^{\tau} \dot{\bsig}-\Delta_{t,j}^{\tau,h} \dot{\bsig})\phi^2 \ddd x \ddd \tau \ddd t\\
&\quad \int_0^{T-h}\int_0^h\int_{\mO}(\bE (\dot{\bu})-\bE (\dot{\bu}_0))\cdot \Delta_{t}^{\tau} \dot{\bsig}_0\phi^2 - \bE (\dot{\bu})\cdot \Delta_{t}^{\tau} \dot{\bsig}\phi^2  \ddd x \ddd \tau \ddd t\\
&\quad +\int_0^{T-h}\int_0^h\int_{\mO} \nabla( \phi^2(\dot{\bu}_0-\dot{\bu}))\cdot (\Delta_{t,j}^{\tau,h} (\dot{\bsig}-\dot{\bsig}_0)-\Delta_{t}^{\tau} (\dot{\bsig}-\dot{\bsig}_0)) \ddd x \ddd \tau \ddd t\\
&\quad -\int_0^{T-h}\int_0^h\int_{\mO} ((\dot{\bu}_0-\dot{\bu})\otimes \nabla \phi^2)\cdot (\Delta_{t,j}^{\tau,h} (\dot{\bsig}-\dot{\bsig}_0)-\Delta_{t}^{\tau} (\dot{\bsig}-\dot{\bsig}_0)) \ddd x \ddd \tau \ddd t\\
&=-\int_0^{T-h}\int_0^h\int_{\mO} \Delta_j^{-h}(\bE (\dot{\bu}_0)\phi^2)\cdot \dot{\bsig}(t+\tau)\ddd x \ddd \tau \ddd t\\
&\quad +\int_0^{T-h}\int_0^h\int_{\mO}(\bE (\dot{\bu})-\bE (\dot{\bu}_0))\cdot \Delta_{t}^{\tau} \dot{\bsig}_0\phi^2 \ddd x \ddd \tau \ddd t\\
&\quad -\int_0^{T-h}\int_0^h\int_{\mO} \bA \dot{\bsig} \cdot \Delta_{t}^{\tau} \dot{\bsig}\phi^2 +\bH \dot{\bxi} \cdot \Delta_{t}^{\tau} \dot{\bxi}\phi^2 \ddd x + \bH \dot{\bxi} \cdot \Delta_{t}^{\tau} (\dot{\bsig}_D-\dot{\bxi}_D)\phi^2 \ddd \tau \ddd t\\
&\quad -\int_0^{T-h}\int_0^h\int_{\mO} \Delta_j^h((\dot{\bu}_0-\dot{\bu})\otimes \nabla \phi^2)\cdot ((\dot{\bsig}(t+\tau)-\dot{\bsig}_0(t+\tau))) \ddd x \ddd \tau \ddd t.
\end{split}
\end{equation*}
Next, we apply the H\"{o}lder inequality, use the symmetry of $\bA$ and $\bH$, the assumption on $\bu_0$ and $\bsig_0$, the uniform bounds \eqref{infty1}--\eqref{infty2} and the time regularity estimate~\eqref{Ni11} to conclude
\begin{equation}
\begin{split}
\label{Ni5s}
&\int_0^{T-h}\int_0^h\int_{\mO} \bE (\dot{\bu}_0-\dot{\bu})\cdot \Delta_{t,j}^{\tau,h} (\dot{\bsig}-\dot{\bsig}_0)\phi^2 - \bE(\dot{\bu}_0)\cdot \Delta_{t,j}^{\tau,h} \dot{\bsig}\phi^2 \ddd x \ddd \tau \ddd t\\
&\le \int_0^{T-h}\int_0^h h\|\bE (\dot{\bu}_0)\phi^2\|_{1,2} \|\dot{\bsig}(t+\tau)\|_2+\tau \|\bE (\dot{\bu})-\bE (\dot{\bu}_0)\|_2 \|\ddot{\bsig}_0\|_2 \ddd \tau \ddd t\\
&\quad +\frac12 \int_0^{T-h}\int_0^h\int_{\mO}\bA \Delta_{t}^{\tau} \dot{\bsig} \cdot \Delta_{t}^{\tau} \dot{\bsig}\phi^2 +\bH \Delta_{t}^{\tau} \dot{\bxi} \cdot \Delta_{t}^{\tau} \dot{\bxi}\phi^2 \ddd x \ddd \tau \ddd t \\
 &\quad -\int_0^{T-h}\int_0^h\int_{\mO}\bH \dot{\bxi} \cdot \Delta_{t}^{\tau} (\dot{\bsig}_D-\dot{\bxi}_D)\phi^2 \ddd x \ddd \tau \ddd t\\
&\quad -\frac12 \int_0^{T-h}\int_0^h\int_{\mO} \Delta_{t}^{\tau}(\bA \dot{\bsig} \cdot  \dot{\bsig})\phi^2 +\Delta_{t}^{\tau}(\bH \dot{\bxi} \cdot \dot{\bxi})\phi^2 \ddd x  \ddd \tau \ddd t\\
&\quad +\int_0^{T-h}\int_0^h h\|(\dot{\bu}_0-\dot{\bu})\otimes \nabla \phi^2\|_{1,2} \|\dot{\bsig}(t+\tau)-\dot{\bsig}_0(t+\tau)\|_2  \ddd \tau \ddd t\\
&\le Ch^2 -\frac12\int_0^{T-h}\int_0^h\int_{\mO} \Delta_{t}^{\tau}(\bA \dot{\bsig} \cdot  \dot{\bsig})\phi^2 +\Delta_{t}^{\tau}(\bH \dot{\bxi} \cdot \dot{\bxi})\phi^2 \ddd x  \ddd \tau \ddd t\\
&\quad  +\int_0^{T-h}\int_0^h\int_{\mO} \bH \dot{\bxi} \cdot \Delta_{t}^{\tau} (\dot{\bsig}_D-\dot{\bxi}_D)\phi^2 \ddd x  \ddd \tau \ddd t.
\end{split}
\end{equation}
There are still remaining two terms on the right hand side. But for the first one we can use exactly the same computation as in \eqref{Ni6} and observe
\begin{equation}\label{alreadyd}
\left| \int_0^{T-h}\int_0^h\int_{\mO} \Delta_{t}^{\tau}(\bA \dot{\bsig} \cdot  \dot{\bsig})\phi^2 +\Delta_{t}^{\tau}(\bH \dot{\bxi} \cdot \dot{\bxi})\phi^2 \right| \le Ch^2.
\end{equation}
To estimate the remaining term in \eqref{Ni5s}, we again use the abbreviation $\bbeta:=\bsig_D-\bxi_D$ and following the computation in \eqref{dont} we deduce
\begin{equation}
\begin{split}
&2\int_0^{T-h}\int_0^h\int_{\mO}\bH \dot{\bxi} \cdot \Delta_{t}^{\tau} (\dot{\bsig}_D-\dot{\bxi}_D)\phi^2 \ddd x \ddd \tau \ddd t\\
&=\int_{0}^{T-h}\int_{\mO} 2\mu^{-1}(|\bbeta|-1)_+ \frac{\bbeta}{|\bbeta|} \cdot \left(\int_0^h \Delta_t^{\tau}\dot{\bbeta}\ddd \tau\right)\phi^2\ddd x \ddd t \\
&\quad \le \int_{0}^{h}\int_{\mO} \mu^{-1} (|\bbeta(T-h+t)|-1)^2_+\phi^2 -(|\bbeta(T-h)|-1)^2_+\phi^2\ddd x \ddd t \\
&\quad \; -\int_0^h\int_{\mO} \mu^{-1} (|\bbeta(t)|-1)^2_+\phi^2 -(|\bbeta(0)|-1)^2_+\phi^2 \ddd x \ddd t.
\end{split}\label{donts}
\end{equation}
Similarly as before (compare with \eqref{vztha4hl}), we also have the identity
\begin{equation}
\label{vztha4hls}
\begin{split}
&\frac12 \frac{\ddd}{\ddd t}\int_{\mO} \mu^{-1}(|\bbeta|-1)_+ \phi^2 \ddd x \\
&= \int_{\mO}(\bE (\dot \bu) \cdot \dot\bsig \phi^2 -\bA \dot{\bsig} \cdot \dot{\bsig}\phi^2- \bH \dot{\bxi} \cdot \dot{\bxi}\phi^2 \ddd x,
\end{split}
\end{equation}
which after integration over arbitrary interval $(\tau,\tau+\alpha)\subset (0,T)$ and with the help of the a~priori estimate \eqref{infty1} and \eqref{infty2}, leads to
\begin{equation}
\label{Ni10s}
\begin{split}
&\left|\int_{\mO}\mu^{-1}(|\bbeta(\alpha+\tau)|-1)^2_+ -\mu^{-1}(|\bbeta(\tau)|-1)^2_+\ddd x\right| \le C \alpha.
\end{split}
\end{equation}
Thus, using this estimate in \eqref{donts}, we see that
\begin{equation}
\begin{split}
&\int_0^{T-h}\int_0^h\int_{\mO}\bH \dot{\bxi} \cdot \Delta_{t}^{\tau} (\dot{\bsig}_D-\dot{\bxi}_D)\phi^2 \ddd x \ddd \tau \ddd t \le Ch^2.
\end{split}\label{dont2s}
\end{equation}

\bigskip

\paragraph{\bf The second term in \eqref{Ni2s}}
Again here, we split the time and space shift and then use \eqref{alreadyd} and the spatial regularity of $\phi$ as follows
\begin{equation}
\begin{split}\label{lips}
&\int_0^{T-h}\int_0^h\int_{\mO} \Delta_{t,j}^{\tau,h}(\bA \dot{\bsig}\cdot \dot{\bsig})\phi^2+\Delta_{t,j}^{\tau,h} (\bH \dot{\bxi}\cdot \dot{\bxi})\phi^2\ddd x \ddd \tau \ddd t\\
&=\int_0^{T-h}\int_0^h\int_{\mO} (\Delta_{t,j}^{\tau,h}(\bA \dot{\bsig}\cdot \dot{\bsig}+\bH \dot{\bxi}\cdot \dot{\bxi})-\Delta_{t}^{\tau}(\bA \dot{\bsig}\cdot \dot{\bsig}+\bH \dot{\bxi}\cdot \dot{\bxi}))\phi^2\ddd x \ddd \tau \ddd t\\
&\quad +\int_0^{T-h}\int_0^h\int_{\mO}\Delta_{t}^{\tau}(\bA \dot{\bsig}\cdot \dot{\bsig}+\bH \dot{\bxi}\cdot \dot{\bxi})\phi^2\ddd x \ddd \tau \ddd t\\
&=-\int_0^{T-h}\int_0^h\int_{\mO} (\bA \dot{\bsig}\cdot \dot{\bsig}+\bH \dot{\bxi}\cdot \dot{\bxi})(t+\tau) (\Delta_{j}^{-h}\phi^2)\ddd x \ddd \tau \ddd t\\
&\quad +\int_0^{T-h}\int_0^h\int_{\mO}\Delta_{t}^{\tau}(\bA \dot{\bsig}\cdot \dot{\bsig}+\bH \dot{\bxi}\cdot \dot{\bxi})\phi^2\ddd x \ddd \tau \ddd t\\
&\le Ch^2 + Ch\int_0^{T}\int_0^h \|\dot{\bsig}\|_2^2 + \|\dot{\bxi}\|_2^2\ddd \tau \ddd t\le Ch^2.
\end{split}
\end{equation}

\bigskip

\paragraph{\bf The first term in \eqref{Ni2s}}
We again use the abbreviation $\bbeta:= \bsig_D-\bxi_D$. The we estimate the first term in \eqref{Ni2s} very similarly as in \eqref{dont} as follows. First, we rewrite it in the following way
\begin{equation*}
\begin{split}
&2\int_0^{T-h}\int_{\mO} \mu^{-1}(|\bbeta|-1)_+ \frac{\bbeta}{|\bbeta|} \cdot \left(\int_0^h\Delta_{t,j}^{\tau,h} \dot{\bbeta}\ddd \tau \right)\phi^2\ddd x \ddd t\\
&=2\int_0^{T-h}\int_{\mO} \mu^{-1}(|\bbeta|-1)_+ \frac{\bbeta}{|\bbeta|} \cdot \left(\Delta_{t,j}^{h,h}\bbeta - \Delta_{j}^h\bbeta-h \dot{\bbeta} \right)\phi^2\ddd x \ddd t\\
&=2\int_0^{T-h}\int_{\mO} \mu^{-1}(|\bbeta|-1)_+ \frac{\bbeta}{|\bbeta|} \cdot \Delta_{t,j}^{h,h}\bbeta \phi^2\ddd x \ddd t\\
&\quad -2\int_0^{T-h}\int_{\mO} \left(\mu^{-1}(|\bbeta|-1)_+ \frac{\bbeta}{|\bbeta|}\right)(t,x+he_j) \cdot  \Delta_{j}^h\bbeta\phi^2\ddd x \ddd t\\
&\quad -h\int_0^{T-h}\frac{\ddd}{\ddd t}\int_{\mO} \mu^{-1}(|\bbeta|-1)^2_+ \phi^2\ddd x \ddd t\\
&\quad+2\int_0^{T-h}\int_{\mO} \Delta_j^h \left(\mu^{-1}(|\bbeta|-1)_+ \frac{\bbeta}{|\bbeta|} \right)\cdot  \Delta_{j}^h\bbeta\phi^2\ddd x \ddd t
\end{split}
\end{equation*}
and then using the convexity and the fact $|\bbeta(0)|\le 1$, we can estimate it as
\begin{equation}
\begin{split}\label{dontx}
&2\int_0^{T-h}\int_{\mO} \mu^{-1}(|\bbeta|-1)_+ \frac{\bbeta}{|\bbeta|} \cdot \left(\int_0^h\Delta_{t,j}^{\tau,h} \dot{\bbeta}\ddd \tau \right)\phi^2\ddd x \ddd t\\
&\le \int_0^{T-h}\int_{\mO} (\Delta_{t,j}^{h,h} (\mu^{-1}(|\bbeta|-1)^2_+)- \Delta_{j}^{h} (\mu^{-1}(|\bbeta|-1)^2_+)) \phi^2\ddd x \ddd t\\
&\quad -h\int_{\mO} (\mu^{-1}(|\bbeta(T-h)|-1)^2_+ - \mu^{-1}(|\bbeta(0)|-1)^2_+) \phi^2\ddd x \\
&\quad+2\int_0^{T-h}\int_{\mO} \Delta_j^h \left(\mu^{-1}(|\bbeta|-1)_+ \frac{\bbeta}{|\bbeta|} \right)\cdot  \Delta_{j}^h\bbeta\phi^2\ddd x \ddd t\\
&=\int_0^{T-h}\int_{\mO} \Delta_{t}^{h} (\mu^{-1}(|\bbeta|-1)^2_+))\phi^2(x-he_j)\ddd x \ddd t\\
&\quad -h\int_{\mO} (\mu^{-1}(|\bbeta(T-h)|-1)^2_+ - \mu^{-1}(|\bbeta(0)|-1)^2_+) \phi^2(x-he_j)\ddd x \\
&\quad +h\int_{\mO} (\mu^{-1}(|\bbeta(T-h)|-1)^2_+ - \mu^{-1}(|\bbeta(0)|-1)^2_+) \Delta^{-h}_j\phi^2\ddd x \\
&\quad+2\int_0^{T-h}\int_{\mO} \Delta_j^h \left(\mu^{-1}(|\bbeta|-1)_+ \frac{\bbeta}{|\bbeta|} \right)\cdot  \Delta_{j}^h\bbeta\phi^2\ddd x \ddd t.
\end{split}
\end{equation}
Next, we estimate the remaining terms separately. First, using the very similar procedure as in \eqref{dont} and consequent inequalities, we observe
\begin{equation*}
\begin{split}
&\int_0^{T-h}\int_{\mO} \Delta_{t}^{h} (\mu^{-1}(|\bbeta|-1)^2_+))\phi^2(x-he_j)\ddd x \ddd t\\
&\quad -h\int_{\mO} (\mu^{-1}(|\bbeta(T-h)|-1)^2_+ - \mu^{-1}(|\bbeta(0)|-1)^2_+) \phi^2(x-he_j)\ddd x \le Ch^2,
\end{split}
\end{equation*}
where the constant $C$ depends on data and on $\phi$. Next, by uniform bound \eqref{Ni10}, we also have the estimate
\begin{equation*}
\begin{split}
&h\int_{\mO} (\mu^{-1}(|\bbeta(T-h)|-1)^2_+ - \mu^{-1}(|\bbeta(0)|-1)^2_+) \Delta^{-h}_j\phi^2\ddd x \\
&\le C h^2 \|\nabla \phi^2\|_{\infty} \int_{\mO} \mu^{-1}(|\bbeta(T-h)|-1)^2_+ \ddd x \le Ch^2.
\end{split}
\end{equation*}
Finally, for the remaining term, we apply the $\Delta_j^h$ to \eqref{vztha} and test by $\Delta_j^h \bsig$ and by  $\Delta_j^h \bxi$. Then we can repeat the same procedure as in estimating the tangential derivatives for $\bsig$ and $\bxi$ and deduce again
\begin{equation*}
\begin{split}
&2\int_0^{T-h}\int_{\mO} \Delta_j^h \left(\mu^{-1}(|\bbeta|-1)_+ \frac{\bbeta}{|\bbeta|} \right)\cdot  \Delta_{j}^h\bbeta\phi^2\ddd x \ddd t\le Ch^2.
\end{split}
\end{equation*}

\bigskip

\paragraph{\bf Summarizing the estimates}
Hence, if we use the above estimates in \eqref{Ni2s}, we get
\begin{equation}
\label{Ni11s}
\begin{split}
&\frac{1}{h^2}\int_{0}^{T-h}\int_0^h\|\phi\Delta_{t,j}^{\tau,h}\dot{\bsig}\|^2_2+\|\phi\Delta_{t,j}^{\tau,h}\dot{\bxi}\|^2_2 \ddd \tau \ddd t\le C
\end{split}
\end{equation}
for arbitrary $j=1,\ldots,d-1$ and localization function $\phi$. Then, by a simple inequality, we deduce
\begin{equation}
\label{Ni12s}
\begin{split}
&\frac{1}{h}\int_{0}^{T-h}\|\phi\Delta_{j}^{h}\dot{\bsig}\|^2_2+\|\phi\Delta_{j}^{h}\dot{\bxi}\|^2_2 \ddd t\\
&=\frac{1}{h^2}\int_{0}^{T-h}\int_0^h\|\phi\Delta_{j}^{h}\dot{\bsig}\|^2_2+\|\phi\Delta_{j}^{h}\dot{\bxi}\|^2_2 \ddd \tau \ddd t\\
&\le \frac{C}{h^2}\int_{0}^{T-h}\int_0^h\|\phi(\Delta_{t,j}^{\tau,h}\dot{\bsig}-\Delta_{j}^{h}\dot{\bsig}\|_2^2 +\|\phi\Delta_{t,j}^{\tau,h}\dot{\bsig}\|_2^2 +\|\phi(\Delta_{t,j}^{\tau,h}\dot{\bxi}-\Delta_{j}^{h}\dot{\bxi}\|_2^2 \\
&\qquad {} \qquad +\|\phi\Delta_{t,j}^{\tau,h}\dot{\bxi}\|_2^2 \ddd \tau \ddd t\\
&\le \frac{C}{h^2}\int_{0}^{T-h}\int_0^h\|\Delta_{t}^{\tau}\dot{\bsig}\|_2^2 +\|\phi\Delta_{t,j}^{\tau,h}\dot{\bsig}\|_2^2 +\|\phi(\Delta_{t}^{\tau}\dot{\bxi}\|_2^2 +\|\phi\Delta_{t,j}^{\tau,h}\dot{\bxi}\|_2^2 \ddd \tau \ddd t
\le C,
\end{split}
\end{equation}
where the last inequality follows from time regularity estimates and \eqref{Ni11s}.

\section{Normal derivatives estimates}
\label{normal}
In this final part, we derive the estimate for the normal derivative. Note that the estimate is again uniform with respect to $\mu$. We also keep the notation near the boundary and use the function $\phi$ which is compactly supported in a cube $(-1+h_0,1-h_0)^{d}$ and equal to one in a cube $(-1+2h_0,1-2h_0)^d$. We start this part by using already proven time fractional regularity to transfer also the spatial regularity.

\subsection{Estimate for for $\dot{\bsig}$ and $\dot{\bxi}$ via time interpolation}
Here, we show how the fractional estimates in the $d$-direction for $\bsig$ and $\bxi$ and the fractional estimates in the time direction for $\dot{\bsig}$ and $\dot \bxi$ can improve the spatial regularity of $\dot{\bsig}$ and $\dot{\bxi}$. The key estimate is formulated in the following.
\begin{Lemma}\label{interpol1}
Let $\phi$ be as above. Then for any $\delta\in (0,\frac13)$, the solution satisfies
\begin{equation}
\label{interpol2}
\begin{split}
\int_0^T &\int_{\mO}|\Delta^h_d \dot{\bsig}|^2\phi^2 + |\Delta^h_d \dot{\bxi}|^2\phi^2\ddd x \ddd t \\
 &\le C(\delta)\left(\int_0^T \int_{\mO} |\Delta^h_d {\bsig}|^2\phi^2 + |\Delta^h_d {\bxi}|^2\phi^2\ddd x \ddd t\right)^{\frac13 -\delta},
\end{split}
\end{equation}
where the constant $C(\delta)$ depends only on data and explodes as $\delta\to 0_+$.
\end{Lemma}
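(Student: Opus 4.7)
The strategy is a pure time-interpolation between two already-established facts: the spatial $L^2$-smallness of the normal-direction finite differences of $\bsig$ and $\bxi$ (which sits on the right-hand side of \eqref{interpol2}), and the uniform fractional bound $\dot\bsig,\dot\bxi\in N^{1/2,2}(0,T;L^2(\mO))$ proved in Subsection~\ref{ss4}. No further testing in the equations \eqref{vztha} is needed; the lemma is a soft consequence of those two inputs.

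First I would set $g(t):=\phi\,\Delta^h_d\bsig(t)$ and $\tilde g(t):=\phi\,\Delta^h_d\bxi(t)$, viewed as curves in $L^2(\mO)$, so that \eqref{interpol2} reads
\begin{equation*}
\|\dot g\|_{L^2(0,T;L^2)}^2+\|\dot{\tilde g}\|_{L^2(0,T;L^2)}^2\le C(\delta)\bigl(\|g\|_{L^2(0,T;L^2)}^2+\|\tilde g\|_{L^2(0,T;L^2)}^2\bigr)^{\frac13-\delta}.
\end{equation*}
Since $\phi$ depends only on $x$ and $\Delta^h_d$ is a bounded linear operator on $L^2(\mO)$ commuting with time translations, the uniform $N^{1/2,2}$ bound on $\dot\bsig,\dot\bxi$ transfers immediately (with no loss in $h$ or $\mu$) to $\dot g,\dot{\tilde g}$.

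The heart of the proof is the averaging identity
\begin{equation*}
\dot g(t)=\frac{1}{\eta}\int_0^\eta\bigl[\dot g(t)-\dot g(t+\tau)\bigr]\ddd\tau+\frac{1}{\eta}\bigl[g(t+\eta)-g(t)\bigr]
\end{equation*}
valid for $t\in(0,T-\eta)$. Squaring in $L^2(\mO)$, applying Cauchy--Schwarz in $\tau$, and integrating in $t$ produces
\begin{equation*}
\int_0^{T-\eta}\|\dot g\|_{L^2}^2\ddd t\le\frac{C}{\eta}\int_0^\eta\!\int_0^{T-\eta}\|\dot g(t+\tau)-\dot g(t)\|_{L^2}^2\ddd t\ddd\tau+\frac{C}{\eta^2}\|g\|_{L^2(0,T;L^2)}^2.
\end{equation*}
The Nikolskii bound on $\dot g$ handles the first term, giving a contribution $\le C\eta$; the second is $\le CA\eta^{-2}$ with $A:=\|g\|_{L^2(0,T;L^2)}^2$. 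The tail $t\in(T-\eta,T)$ is treated symmetrically using the backward analogue of the identity. Optimizing in $\eta\in(0,T/2)$ yields $\eta\sim A^{1/3}$ and the bound $\|\dot g\|_{L^2(0,T;L^2)}^2\le CA^{1/3}$, and likewise for $\tilde g$.

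The small loss $\delta>0$ in the exponent is a safety margin absorbing the distinction between the Nikolskii class $N^{1/2,2}=B^{1/2}_{2,\infty}$ and its finer Sobolev--Slobodeckij cousin $W^{1/2,2}=B^{1/2}_{2,2}$: in practice one can interpolate with time-regularity index $1/2-\varepsilon$ for $\dot g$ (still controlled uniformly) and obtain exponent $(1/2-\varepsilon)/(3/2-\varepsilon)\to\frac13$ as $\varepsilon\to 0$, which produces any $\frac13-\delta$ at the price of a constant $C(\delta)$ that blows up as $\delta\to 0_+$. I do not expect a genuine obstacle in this lemma; the only delicate point is bookkeeping the fact that the constant $C(\delta)$ is independent of $h$ and $\mu$, which it is because both inputs (the $L^2$-norm of $g$ and the $N^{1/2,2}$-norm of $\dot g$) are uniform in those parameters.
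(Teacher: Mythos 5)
Your proof is correct, and it takes a genuinely different route from the paper's. The paper argues via interpolation in Bochner--Sobolev scales: it invokes
$\int_0^T\|\dot f\|_2^2\ddd t\le C\|f\|_{L^2(0,T;L^2)}^{2-2/\alpha}\|f\|_{W^{\alpha,2}(0,T;L^2)}^{2/\alpha}$ for $1<\alpha<\tfrac32$, then the Nikolskii--Sobolev embedding $N^{\frac32,2}\hookrightarrow W^{\alpha,2}$ (valid only for $\alpha<\tfrac32$), applied to $f=\phi\Delta^h_d\bsig$ and $f=\phi\Delta^h_d\bxi$, with the uniform $N^{\frac12,2}$-estimate \eqref{Ni-fin} on $\dot\bsig,\dot\bxi$ supplying the $N^{\frac32,2}$-control on $f$; the $\delta$-loss and the blow-up of $C(\delta)$ in the lemma statement are exactly the price of that embedding. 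Your averaging identity plus the optimization $\eta\sim A^{1/3}$ replaces that chain by an elementary computation acting directly on the Nikolskii seminorm, never passing through a Besov or Sobolev embedding. Besides being simpler, it is in fact sharper: carried through, it yields the exponent $\tfrac13$ with no $\delta$-loss at all (once the edge case $A^{1/3}\gtrsim T$ is disposed of via the crude a~priori bound on $\int_0^T\|\dot g\|_2^2\ddd t$, which is uniform in $h$ and $\mu$). For this reason your closing paragraph on why a $\delta$-loss ``must'' appear is a red herring---your argument simply does not incur one, whereas the paper's does because of its choice of embedding; that discussion could be dropped without affecting the proof. The one bookkeeping point, which you correctly flag, is the transfer of the uniform Nikolskii bound to $\dot g=\phi\Delta^h_d\dot\bsig$; this holds because $\Delta^h_d$ (for $h<h_0$, so the shift stays inside the cube where $\phi$ is supported) and multiplication by $\phi$ are bounded on $L^2(\mO)$ uniformly in $h$ and commute with time shifts, and because the $L^\infty(0,T;L^2)$ bound \eqref{infty1} lets one pad the slightly shorter integration range $(0,T-2\tau)$ in \eqref{Ni-fin} out to $(0,T-\tau)$ at the cost of another $C\tau$.
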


\begin{proof}
The proof is based on the interpolation of Bochner-Sobolev spaces. We recall the classical Bochner-Sobolev interpolation between $W^{\alpha,2}$ and $L^2$ and also the Nikolskii-Sobolev embedding $N^{\frac32,2}\hookrightarrow W^{\alpha,2}$ valid for all $\alpha <\frac32$ to get (we use any $\alpha \in (1,\frac32)$)
\begin{equation}\label{sobolev}
\begin{split}
&\int_0^T \|\dot{f}\|_2^2\ddd t \le  C\|f\|_{L^2(0,T;L^2(\mO))}^{2-\frac{2}{\alpha}} \|f\|^{\frac{2}{\alpha}}_{W^{\alpha,2}(0,T; L^2(\mO))} \\
&\le  C\|f\|_{L^2(0,T;L^2(\mO))}^{2-\frac{2}{\alpha}} \|f\|^{\frac{2}{\alpha}}_{N^{\frac32,2}(0,T; L^2(\mO))} \\
 &\le C\int_0^T \|f\|_2^2 \ddd t+ C(\alpha)\left( \sup_{h\in (0,T)}\int_0^{T-h} \frac{\|\Delta_t^h \dot{f}\|_2^2}{h} \ddd t\right)^{\frac{1}{\alpha}}\left(\int_0^T \|f\|_2^2 \ddd t\right)^{1-\frac{1}{\alpha}},
\end{split}
\end{equation}
where the constant $C(\alpha)$ explodes as $\alpha \to \frac32$. Consequently, using the above estimate on $f:=\Delta^h_d \bsig \phi $ and $\Delta^h_d \bxi\phi$ and using the a~priori bound \eqref{Ni-fin}, we see that
\begin{equation}\label{sobolev2}
\begin{split}
\int_0^T &\int_{\mO}|\Delta^h_d \dot{\bsig}|^2\phi^2 + |\Delta^h_d \dot{\bxi}|^2\phi^2\ddd x \ddd t \\
 &\le C\int_0^T \int_{\mO} |\Delta^h_d {\bsig}|^2\phi^2 + |\Delta^h_d {\bxi}|^2\phi^2\ddd x \ddd t \\
 &\qquad + C(\alpha)\left(\int_0^T \int_{\mO} |\Delta^h_d {\bsig}|^2\phi^2 + |\Delta^h_d {\bxi}|^2\phi^2\ddd x \ddd t\right)^{1-\frac{1}{\alpha}}.
\end{split}
\end{equation}
Since $\alpha\in (1,\frac32)$ can be arbitrary and we have the control \eqref{infty1}, the estimate \eqref{interpol2} follows.
\end{proof}

\subsection{First estimate for $\bsig$ and $\bxi$}
Here, we start with an estimate, that directly leads to $\frac12$ regularity of the stress and hardening, but it will also  serve later for the bootstrap argument.
\begin{Lemma}\label{L-n1} Let $\phi$ be chosen such that $\phi(x',s)$ is independent of $s$ for all $s\in [0,h_0]$ with $h_0>0$. Then for all $h\in (0,h_0)$ the following estimate holds
\begin{equation}\label{nor-start}
\begin{split}
\sup_{t\in (0,T)} &\|\phi \Delta_d^h \bsig(t)\|_2^2 + \|\phi \Delta_d^h \bxi(t)\|_2^2 \\
&\le Ch^{\frac32}+ Ch\int_0^T \left(\int_{\mO \cap \{x_d\in (0,h)\}} \!\!\!\!\!\! |D_d\dot{\bu}\phi|^2\ddd x\right)^{\frac12}\ddd t.
\end{split}
\end{equation}
In addition, if the $\supp \phi \cap \overline{\mO}_N=\emptyset$ then we have
\begin{equation}\label{nor-start2}
\begin{split}
\sup_{t\in (0,T)} &\|\phi \Delta_d^h \bsig(t)\|_2^2 + \|\phi \Delta_d^h \bxi(t)\|_2^2 \\
&\le Ch^{\frac32}+Ch\int_0^T\left(\int_{\mO \cap \{x_d\in (0,h)\}} |\bE(\dot{\bu})\phi|^2\ddd x\right)^{\frac12}\ddd t.
\end{split}
\end{equation}
\end{Lemma}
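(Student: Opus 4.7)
The plan follows the testing scheme of Subsection~2.2, using normal finite differences $\Delta_d^h$ in place of tangential derivatives; the essential new feature is that the surface integral on $\{x_d=0\}$ no longer vanishes. I would apply $\Delta_d^h$ to both equations in \eqref{vztha}, test the first with $\Delta_d^h\bsig\,\phi^2$ and the second with $\Delta_d^h\bxi\,\phi^2$, and sum. Monotonicity of $P(\bbeta):=\mu^{-1}(|\bbeta|-1)_+\bbeta/|\bbeta|$ makes the plastic contribution $\int\Delta_d^h P(\bbeta)\cdot\Delta_d^h(\bsig_D-\bxi_D)\,\phi^2\ddd x$ nonnegative and hence discardable, while the symmetry of $\bA,\bH$ turns the remaining left-hand side into $\tfrac{1}{2}\tfrac{d}{dt}\int(\bA\Delta_d^h\bsig\cdot\Delta_d^h\bsig+\bH\Delta_d^h\bxi\cdot\Delta_d^h\bxi)\phi^2\ddd x$. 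The only term to control is then $\int\bE(\Delta_d^h\dot\bu)\cdot\Delta_d^h\bsig\,\phi^2\ddd x$.

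I would next integrate this remaining term by parts spatially, exploiting the symmetry of $\Delta_d^h\bsig$ and the identity $\diver(\Delta_d^h\bsig)=-\Delta_d^h\bef$; since $\phi$ vanishes on the lateral sides of the cube, one is left with the volume contributions $\int\phi^2\Delta_d^h\dot\bu\cdot\Delta_d^h\bef\ddd x$ and $-\int\Delta_d^h\dot\bu\cdot\Delta_d^h\bsig\,\nabla\phi^2\ddd x$ together with the surface term
\[
B(t):=-\int_{\mO\cap\{x_d=0\}}\Delta_d^h\dot\bu(x',0)\cdot\Delta_d^h\bsig(x',0)\,e_d\,\phi^2\ddd x'.
\]
The volume terms are handled using $\|\Delta_d^h\dot\bu(t)\|_2\le Ch$ (from \eqref{infty2}) and the data regularity of $\bef$, $\bsig_0$ and $\bu_0$, with a Young-inequality absorption of an $\epsilon\sup_t\|\phi\Delta_d^h\bsig\|_2^2$ term into the left-hand side; combined with the initial estimate $\|\phi\Delta_d^h\bsig(0)\|_2^2\le Ch^2$ (from $\bsig(0)=\bsig_0(0)\in W^{1,2}$ and $\bxi(0)=0$), they supply the $Ch^{3/2}$ part of \eqref{nor-start}.

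The key novelty is the estimate of $B(t)$. Although $\bsig$ has no a~priori $W^{1,2}$ regularity up to the boundary, the equilibrium $\diver\bsig=-\bef$ yields the pointwise relation $D_d\bsig_{jd}=-\bef_j-\sum_{i<d}D_i\bsig_{ij}$, whose right-hand side is uniformly controlled in $L^2(\phi^2\ddd x)$ by the data and by the tangential estimate~\eqref{odhadnab}, so $\|\phi D_d(\bsig\,e_d)\|_{L^\infty(0,T;L^2)}\le C$. Writing
\[
\Delta_d^h\bsig(x',0)\,e_d=\int_0^h D_d(\bsig\,e_d)(x',s)\ddd s,\qquad \Delta_d^h\dot\bu(x',0)=\int_0^h D_d\dot\bu(x',s)\ddd s,
\]
and applying Cauchy--Schwarz first in $s$ and then in $x'$ (using that $\phi$ is constant in $x_d$ on $[0,h_0]$) yields $|B(t)|\le Ch\bigl(\int_{\mO\cap\{x_d\in(0,h)\}}|D_d\dot\bu\,\phi|^2\ddd x\bigr)^{1/2}$, which after time integration produces the remaining term in \eqref{nor-start}. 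For \eqref{nor-start2}, the assumption $\supp\phi\cap\overline{\partial\mO}_N=\emptyset$ means $\phi$ only meets the Dirichlet boundary, and a local Korn inequality applied to $(\dot\bu-\dot\bu_0)\phi$ on the strip converts the $D_d\dot\bu$-bound into an $\bE(\dot\bu)$-bound, up to an error absorbable into $Ch^{3/2}$. The main obstacle is precisely the non-vanishing surface integral at $\{x_d=0\}$; its resolution hinges on the observation that only the normal column $D_d\bsig\,e_d$ --- not all of $D_d\bsig$ --- is needed, and that column is furnished for free by the divergence constraint together with the previously established tangential bounds.
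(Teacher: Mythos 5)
Your plan is structurally the same as the paper's: apply $\Delta_d^h$, test with $\phi^2$--weighted increments, drop the monotone plastic term, integrate by parts, and resolve the non-vanishing surface integral at $\{x_d=0\}$ by writing both finite differences as integrals of $D_d$ over the strip, then observing that only the column $D_d(\bsig e_d)$ enters, which is controlled by the equilibrium equation together with the tangential bound \eqref{odhadnab}. That observation is indeed the decisive idea, and your Cauchy--Schwarz chain for $B(t)$ reproduces the paper's \eqref{QQ5} correctly.

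There is, however, one genuine gap. You test with $\Delta_d^h\bsig\,\phi^2$ rather than $\Delta_d^h(\bsig-\bsig_0)\,\phi^2$, and therefore, after integrating by parts, you are left with the volume term
\[
\int_{\mO}\phi^2\,\Delta_d^h\dot\bu\cdot\Delta_d^h\bef\,\ddd x .
\]
You claim this is absorbed into the $Ch^{3/2}$ budget using the data regularity of $\bef$, but the standing assumption \eqref{AAK} only gives $\bsig_0\in W^{1,\infty}(0,T;W^{1,2})$, hence $\bef=-\diver\bsig_0\in L^\infty(0,T;L^2(\mO))$ and nothing more: in particular there is no spatial modulus of continuity for $\bef$, so $\|\Delta_d^h\bef\|_2$ is only $O(1)$, not $O(h^{1/2})$. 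Combined with $\|\Delta_d^h\dot\bu\|_2\le Ch$ from \eqref{infty2} this term integrates in time to $O(h)$, which is strictly weaker than the claimed $O(h^{3/2})$ and, more importantly, is not compatible with the later iteration in \eqref{nor-start-aaa}: there one divides by $h^{1+\frac{p-2}{2p}}$ with $\frac{p-2}{2p}>0$, which an $O(h)$ remainder cannot survive. The paper avoids this entirely by testing with $\Delta_d^h(\bsig-\bsig_0)\,\phi^2$, for which $\diver\Delta_d^h(\bsig-\bsig_0)=0$ identically (this is exactly the content of $(\bsig,\bxi)\in\mathcal{F}_{el}$), so no $\bef$--term appears at all. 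Your proof should adopt the same shift (and correspondingly $\dot\bu\mapsto\dot\bu-\dot\bu_0$, $\bxi\mapsto\bxi-\bxi_0$); doing so removes the problematic term, and the rest of your argument, including the $B(t)$ bound and the Korn step for \eqref{nor-start2}, then goes through.
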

\begin{proof}
We apply the operator $\Delta^h_d$ to both equations in \eqref{vztha} and and test by $\Delta_d^h \bsig \phi^2$ and $\Delta_d^h \bxi \phi^2$ respectively. Note that since $x+he_d\in \mO$ such operation is well defined. Thus, doing so, we observe
\begin{equation}
\label{QQ1}
\begin{split}
&\frac12 \frac{\ddd}{\ddd t} \int_{\mO} \bA (\Delta_d^h \bsig -\Delta_d^h \bsig_0) \cdot (\Delta_d^h \bsig -\Delta_d^h \bsig_0) \phi^2 \ddd x\\
&+\frac12 \frac{\ddd}{\ddd t}  \bH (\Delta_d^h \bxi -\Delta_d^h \bxi_0) \cdot (\Delta_d^h \bxi -\Delta_d^h \bxi_0) \phi^2 \ddd x\\
 &\quad + \int_{\mO} \Delta_d^j \left( \mu^{-1}(|\bsig_D-\bxi_D|-1)_+ \frac{\bsig_D-\bxi_D}{|\bsig_D-\bxi_D|} \right) \cdot \Delta_d^h (\bsig_D -\bxi_D)\phi^2\ddd x\\
&= \int_{\mO} \Delta_d^h\bE(\dot{\bu}-\dot{\bu}_0) \cdot \Delta_d^h (\bsig-\bsig_0) \phi^2\ddd x \\
&\quad +\int_{\mO}\Delta_d^h\bE(\dot{\bu}_0) \cdot \Delta_d^h (\bsig-\bsig_0) \phi^2  - \Delta_d^h(\bA  \dot{\bsig}_0+\bH \dot{\bxi}_0)\cdot (\Delta_d^h \bsig-\Delta_d^h \bsig_0)\phi^2 \ddd x.
\end{split}
\end{equation}
The terms on the left hand side are those from which we read information. The second term on the right hand side can be easily estimated and prepared for the Grownall lemma as follows
\begin{equation}
\label{QQ2}
\begin{split}
&\int_{\mO}\Delta_d^h\bE(\dot{\bu}_0) \cdot \Delta_d^h (\bsig-\bsig_0) \phi^2  - \Delta_d^h(\bA  \dot{\bsig}_0+\bH \dot{\bxi}_0)\cdot (\Delta_d^h \bsig-\Delta_d^h \bsig_0)\phi^2 \ddd x\\
&\le \|(\Delta_d^h \bsig-\Delta_d^h \bsig_0)\phi\|_2 (\|\Delta_d^h\bE(\dot{\bu}_0)\|_2+\|\Delta_d^h(\bA  \dot{\bsig}_0+\bH \dot{\bxi}_0)\|_2)\\
&\le Ch(\|\nabla \dot{\bsig}_0\|_2+\|\nabla \dot{\bxi}_0\|_2)\|(\Delta_d^h \bsig-\Delta_d^h \bsig_0)\phi\|_2,
\end{split}
\end{equation}
where the last inequality follows from Sobolev characterization and  the assumptions on data.
Thus, we can now focus on the most critical term in \eqref{QQ1}, which is the first integral on the right hand side. To simplify the notation we use the abbreviation $x':=(x_1,\ldots, x_{d-1})$. Then using integration by parts, we observe
\begin{equation}
\label{QQ3}
\begin{split}
&\int_{\mO} \Delta_d^h\bE(\dot{\bu}-\dot{\bu}_0) \cdot \Delta_d^h (\bsig-\bsig_0) \phi^2\ddd x =\int_{\mO} \nabla \Delta_d^h(\dot{\bu}-\dot{\bu}_0) \cdot \Delta_d^h (\bsig-\bsig_0) \phi^2\ddd x \\
&=-2\int_{\mO} (\Delta_d^h(\dot{\bu}-\dot{\bu}_0)\otimes \nabla \phi) \cdot \Delta_d^h (\bsig-\bsig_0) \phi\ddd x \\
&\qquad + \sum_{i=1}^d\int_{\{x_d=0\}}\Delta_d^h(\dot{\bu}_i-\dot{\bu}_{0i})\Delta_d^h (\bsig_{id}-\bsig_{0id}) \phi^2\ddd x'\\
&\le C\|\Delta_d^h(\dot{\bu}-\dot{\bu}_0\|_2\|\Delta_d^h (\bsig-\bsig_0) \phi\|_2  \\
&\qquad + \sum_{i=1}^d\int_{\{x_d=0\}}\Delta_d^h(\dot{\bu}_i-\dot{\bu}_{0i})\Delta_d^h (\bsig_{id}-\bsig_{0id}) \phi^2\ddd x'\\
\end{split}
\end{equation}
The first term on the right hand side can be estimated by the use of characterization of Sobolev functions and the a~priori bound \eqref{infty2} as
\begin{equation}
\begin{split}\label{QQ4}
&\|\Delta_d^h(\dot{\bu}-\dot{\bu}_0\|_2\|\Delta_d^h (\bsig-\bsig_0) \phi\|_2\le Ch(\|\nabla \dot{\bu}\|_2 + \|\nabla \dot{\bu}_0\|_2)\|\Delta_d^h (\bsig-\bsig_0) \phi\|_2\\
&\quad \le Ch \|\Delta_d^h (\bsig-\bsig_0) \phi\|_2.
\end{split}
\end{equation}
For the second term on the right hand side, we first replace the differences by the corresponding integral, then use the fact that $\diver (\bsig-\bsig_0)=0$ (we also assume that $h\ll 1$ so that $\phi(x',0)=\phi(x',x_d)$ for arbitrary $x_d\in (0,h)$)
\begin{equation*}
\begin{split}
&\int_{\{x_d=0\}}\Delta_d^h(\dot{\bu}_i-\dot{\bu}_{0i})\Delta_d^h (\bsig_{id}-\bsig_{0id}) \phi^2\ddd x'\\
&=\int_{\mathbb{R}^{d-1}}\left(\int_0^hD_d(\dot{\bu}_i-\dot{\bu}_{0i})\ddd x_d \right)\left(\int_0^hD_d (\bsig_{id}-\bsig_{0id}) \ddd x_d\right) \phi^2(x',0)\ddd x'\\
&\le \int_{\mathbb{R}^{d-1}}\left(\int_0^h |D_d\dot{\bu}\phi|+|\nabla (\dot{\bu}_{0}\phi)|\ddd x_d \right)\left|\sum_{j=1}^{d-1}\int_0^hD_j(\bsig_{ij}-\bsig_{0ij}) \ddd x_d\right| \phi(x',0)\ddd x'.
\end{split}
\end{equation*}
Then, we apply the H\"{o}lder inequality to conclude
\begin{equation}
\begin{split}\label{QQ5}
&\int_{\{x_d=0\}}\Delta_d^h(\dot{\bu}_i-\dot{\bu}_{0i})\Delta_d^h (\bsig_{id}-\bsig_{0id}) \phi^2\ddd x'\\
&\le Ch\sum_{j=1}^{d-1}\int_{\mathbb{R}^{d-1}}\left(\int_0^h (|D_d\dot{\bu}\phi|^2+|\nabla (\dot{\bu}_{0}\phi)|^2)\ddd x_d \right)^{\frac12}\times \\
&\qquad \times \left(\int_0^h (|D_j\bsig|^2+|\nabla \bsig_{0}|^2)\phi^2\ddd x_d\right)^{\frac12}\ddd x'\\
&\le Ch(\sum_{j=1}^{d-1}\|D_j \bsig \phi\|_2 + \|\nabla \bsig_0\|_2) \left(\int_{\mO \cap \{x_d\in (0,h)\}}\! \!\!\!\!\! (|D_d\dot{\bu}\phi|^2+|\nabla (\dot{\bu}_{0}\phi)|^2)\ddd x\right)^{\frac12}\\
&\le Ch^{\frac32}+ Ch\left(\int_{\mO \cap \{x_d\in (0,h)\}} |D_d\dot{\bu}\phi|^2\ddd x\right)^{\frac12}
\end{split}
\end{equation}
Hence, using \eqref{QQ2}--\eqref{QQ5} in \eqref{QQ1} and applying the Gronwall lemma, and using the fact that $\bsig(0)=\bsig_0$ and $\bxi(0)=\bxi_0$, and already proven a~priori estimates, we deduce \eqref{nor-start}. In case that the support of $\phi$ is located just closed to the Dirichlet boundary, we may use the Korn inequality (or the trace theorem) and to replace $|D_d\dot{\bu}\phi|$ by $|\bE(\dot{\bu})\phi|$ in the above estimate and to conclude \eqref{nor-start2}.

\end{proof}

\subsection{Estimate for $\nabla \dot{\bu}$ on a strip in terms of  $\dot{\bsig}$ and $\dot{\bxi}$}
In previous section, we deduce a uniform estimate for normal fractional derivatives in terms of $\nabla \dot{\bu}$, the right hand side of \eqref{nor-start}and \eqref{nor-start2}, respectively. In this part, we show how this term can be estimated in terms of $\dot{\bsig}$ and $\dot{\bxi}$. In fact, we prove two different estimates. The first one deals with the case that we have isotropic hardening and we are closed to the Neumann part of the boundary. The second case covers the kinematic hardening independently of Dirichlet or boundary data or the isotropic hardening in case of Dirichlet data.
\begin{Lemma}\label{Odhad-q}
For arbitrary $h\in (0,h_0)$ and $p>2$, the solution satisfies
\begin{equation}\label{odhad-q1}
\int_0^T \left(\int_{\mO \cap \{x_d\in (0,h)\}} \!\!\!\!\!\! |D_d\dot{\bu}\phi|^2\ddd x\right)^{\frac12}\ddd t\le C h^{\frac{p-2}{2p}}(1+ \int_0^T\|\dot{\bsig}\phi\|_{p}+\|\dot{\bxi}\phi\|_{p} \ddd t).
\end{equation}
In addition, if we consider  the  kinematic hardening, we have
\begin{equation}\label{odhad-q2}
\begin{split}
&\int_0^T \left(\int_{\mO \cap \{x_d\in (0,h)\}} \!\!\!\!\!\! |D_d\dot{\bu}\phi|^2\ddd x\right)^{\frac12}\ddd t\\
&\quad \le C(\delta)h^{\frac{p-2}{2p}}\left(1 + \sup_{s\in (0,4h_0)}\int_0^T\int_{\mO}\frac{|\phi \Delta_d^s \bsig|^2+|\phi \Delta_d^s \bxi|^2}{|s|^{\frac{3}{1-3\delta}(\delta+\frac{p-2}{p})}}\ddd x   \ddd t \right)^{\frac{1-3\delta}{6}}.
\end{split}
\end{equation}
Furthermore, for kinematic and isotropic hardening we also have
\begin{equation}\label{odhad-q3}
\begin{split}
&\int_0^T \left(\int_{\mO \cap \{x_d\in (0,h)\}} \!\!\!\!\!\! |\bE(\dot{\bu})\phi|^2\ddd x\right)^{\frac12}\ddd t\\
&\quad \le C(\delta)h^{\frac{p-2}{2p}}\left(1 + \sup_{s\in (0,4h_0)}\int_0^T\int_{\mO}\frac{|\phi \Delta_d^s \bsig|^2+|\phi \Delta_d^s \bxi|^2}{|s|^{\frac{3}{1-3\delta}(\delta+\frac{p-2}{p})}}\ddd x   \ddd t \right)^{\frac{1-3\delta}{6}}.
\end{split}
\end{equation}
\end{Lemma}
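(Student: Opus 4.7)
The plan is to prove \eqref{odhad-q1} by a direct H\"older argument on the thin strip, and to deduce the fractional estimates \eqref{odhad-q2} and \eqref{odhad-q3} by interpolating the $L^p$ norm of $\dot{\bsig}$, $\dot{\bxi}$ against their fractional Nikolskii regularity in the normal direction, and finally converting the latter into spatial regularity of $\bsig, \bxi$ via Lemma~\ref{interpol1}.

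For \eqref{odhad-q1} I would apply H\"older's inequality in the $x_d$-variable on the slab to obtain
\begin{equation*}
\left(\int_{\mO\cap\{x_d\in(0,h)\}} |D_d\dot{\bu}\phi|^2\ddd x\right)^{\frac12} \le h^{\frac{p-2}{2p}}\|D_d\dot{\bu}\phi\|_{L^p(\mO)}.
\end{equation*}
The pointwise identity $\bE(\dot{\bu}) = \bA\dot{\bsig} + \bH\dot{\bxi}$, obtained by summing the two equations of \eqref{vztha}, controls $\|\bE(\dot{\bu})\phi\|_p$ by $\|\dot{\bsig}\phi\|_p + \|\dot{\bxi}\phi\|_p$; a localized Korn inequality in $L^p$ applied to $(\dot{\bu}-\dot{\bu}_0)\phi$ then upgrades this to $\|D_d\dot{\bu}\phi\|_p \le C(\|\dot{\bsig}\phi\|_p + \|\dot{\bxi}\phi\|_p + 1)$, the constant $1$ absorbing the data $\dot{\bu}_0, \dot{\bsig}_0$ through the assumptions \eqref{AAK}. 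Integrating in time delivers \eqref{odhad-q1}; the isotropic case is handled analogously using $\bE(\dot{\bu}) = \bA\dot{\bsig} + \dot{\bep}$ together with $|\dot{\bep}| = H\dot{\xi}$.

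For \eqref{odhad-q2} and \eqref{odhad-q3} I would keep the H\"older step but then interpolate the $L^p$ norm of $\dot{\bsig}\phi, \dot{\bxi}\phi$ using a one-dimensional Nikolskii--Sobolev embedding in the normal direction: for $r>\frac12-\frac1p$ one has $\|g\phi\|_p \le C\|g\phi\|_2^{1-\theta}\|g\phi\|_{N^{r,2}_{\bn}}^{\theta}$ with $\theta$ determined by scaling. The uniform $L^2$ bound \eqref{infty1} absorbs the first factor, leaving only the normal Nikolskii seminorm of $\dot{\bsig}$ and $\dot{\bxi}$. Here Lemma~\ref{interpol1} is invoked to trade this normal regularity of $\dot{\bsig}, \dot{\bxi}$ against the (easier) normal regularity of $\bsig, \bxi$ at the cost of a power $\frac13-\delta$. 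Calibrating $r$ and $\theta$ so that the weight on the difference quotient of $\bsig, \bxi$ matches $|s|^{\frac{3}{1-3\delta}(\delta+\frac{p-2}{p})}$ and the outer exponent lands at $\frac{1-3\delta}{6}$ yields \eqref{odhad-q2}. For \eqref{odhad-q3} the same procedure is run with $\bE(\dot{\bu})$ in place of $D_d\dot{\bu}$ and the Korn step is skipped, so the estimate is available for both hardening types.

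The main obstacle will be the careful bookkeeping of the three coupled exponents (the H\"older factor $h^{(p-2)/(2p)}$, the one-dimensional Nikolskii--Sobolev parameter $\theta$, and the $(\frac13-\delta)$ loss from Lemma~\ref{interpol1}) so that they combine into the precise weight $|s|^{\frac{3}{1-3\delta}(\delta+\frac{p-2}{p})}$ with outer power $\frac{1-3\delta}{6}$. A secondary technical point explains the restriction in \eqref{odhad-q2} to kinematic hardening: only in that case does \eqref{vztha} express $\bE(\dot{\bu})$ as a linear combination of $\dot{\bsig}$ and the symmetric tensor $\dot{\bxi}$, permitting the $L^p$-Korn step that promotes $\bE(\dot{\bu})$ to $D_d\dot{\bu}$. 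In the isotropic case the scalar hardening $\xi$ cannot supply this symmetric-tensor contribution, and near a Neumann boundary no anisotropic Korn bridges the gap, forcing the weaker symmetric-gradient form \eqref{odhad-q3}.
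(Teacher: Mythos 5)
Your proof of \eqref{odhad-q1} matches the paper's: H\"older on the strip, localized Korn in $L^p$ for $\dot{\bu}\phi$, and the constitutive relation. That part is fine.

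For \eqref{odhad-q2} and \eqref{odhad-q3}, however, there is a genuine gap. Your plan routes through the full $L^p(\mO)$ norms $\|\dot{\bsig}\phi\|_p$, $\|\dot{\bxi}\phi\|_p$ and then invokes the interpolation inequality $\|g\phi\|_p \le C\|g\phi\|_2^{1-\theta}\|g\phi\|_{N^{r,2}_{\bn}}^{\theta}$. This inequality is \emph{false}: normal fractional regularity together with $L^2$ integrability cannot produce $L^p$ integrability on a $d$-dimensional domain for $p>2$. (Take $g(x',x_d)=a(x')$ with $a\in L^2\setminus L^p$; then both factors on the right are finite while the left is infinite.) What you have at your disposal after the Hölder step and Lemma~\ref{interpol1} is normal regularity only, with no tangential gain, so the anisotropic jump to $L^p(\mO)$ is not available. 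The paper sidesteps this by never leaving the mixed-norm level: it applies Hölder and the one-dimensional embedding $W^{(p-2)/(2p),2}(0,1)\hookrightarrow L^p(0,1)$ fiber-by-fiber, that is, to $D_d\dot{\bu}\phi(t,x',\cdot)$ (respectively $\bE(\dot{\bu})\phi(t,x',\cdot)$) for each fixed $(t,x')$, and only then integrates in $(t,x')$. This produces the $L^2_{t,x'}(L^p_{x_d})$ norm of $D_d\dot{\bu}\phi$, which is exactly controlled by the $L^2$ norm plus the normal Gagliardo seminorm after Fubini; Korn is then applied in $L^2$ to each shifted difference $\Delta_d^s(\dot{\bu}\phi)$, and only at that point does the equation enter, followed by Lemma~\ref{interpol1} and the elementary $s$-integral bookkeeping.

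A second, smaller inaccuracy concerns your explanation of the kinematic restriction. The $L^p$-Korn step for $\dot{\bu}\phi$ does not need the hardening to be kinematic; Korn holds in both cases. What genuinely requires linearity of the flow rule is the passage, \emph{after} Korn, from $\Delta_d^s\bE(\dot{\bu}\phi)$ to $\Delta_d^s\dot{\bsig}\phi$ and $\Delta_d^s\dot{\bxi}\phi$: in the kinematic case $\bE(\dot{\bu})=\bA\dot{\bsig}+\bH\dot{\bxi}$ commutes with difference quotients, whereas in the isotropic case $\dbep = H\dot{\xi}\,\bsig_D/(\kappa+\xi)$ is nonlinear and one only has the pointwise bound $|\bE(\dot{\bu})|\le C(|\dot{\bsig}|+|\dot{\xi}|)$, which is useless for estimating a difference quotient of $\bE(\dot{\bu})$. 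That is why for \eqref{odhad-q3} the paper drops the Korn step entirely, applies the pointwise bound first, and then runs the fiber-by-fiber argument directly on $\dot{\bsig}\phi$ and $\dot{\bxi}\phi$.
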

\begin{proof}
To prove the first case, we just use the H\"{o}lder and the Korn inequality as follows
\begin{equation}
\begin{split}
\int_{\mO \cap \{x_d\in (0,h)\}} \!\!\!\!\!\! |D_d\dot{\bu}\phi|^2\ddd x &\le C h^{\frac{p-2}{p}}\|D_d (\dot{\bu}\phi)\|^2_{L^p(\mO)}\le C h^{\frac{p-2}{p}}\|\bE(\dot{\bu}\phi)\|^2_{L^p(\mO)}\\
&\le C h^{\frac{p-2}{p}}(\|\bE(\dot{\bu})\phi\|^2_{L^p(\mO)}+1) \le C h^{\frac{p-2}{p}}(1+ \|\dot{\bsig}\phi\|^2_{p}+\|\dot{\bxi}\phi\|^2_{p}),
\end{split}
\end{equation}
where for the last inequality we used the equation \eqref{TPM}. Then we see that \eqref{odhad-q1} directly follows.

Next, we focus on \eqref{odhad-q2}. First, we recall the fractional Sobolev embedding $W^{\frac{p-2}{2p},2}(0,1)\hookrightarrow L^p(0,1)$ valid for all $p\in [2,\infty)$. Then with the help of the H\"{o}lder inequality, we obtain for almost all $x'$ and $t$ that
$$
\begin{aligned}
&\int_0^h |D_d\dot{\bu}\phi(t,x',x_d)|^2\ddd x_d\le h^{\frac{p-2}{p}}\left(\int_0^{2h_0} |D_d\dot{\bu}\phi(t,x',x_d)|^p\ddd x_d\right)^{\frac{2}{p}}\\
&\le Ch^{\frac{p-2}{p}}\left(\int_0^{2h_0} |D_d\dot{\bu}\phi(t,x',x_d)|^2\ddd x_d \right.)\\
&\qquad \left.+ \int_0^{2h_0}\int_0^{2h_0}\frac{|D_d\dot{\bu}\phi(t,x',x_d)-D_d\dot{\bu}\phi(t,x',y_d)|^2}{|x_d-y_d|^{1+\frac{p-2}{p}}}\ddd x_d \ddd y_d \right)
\end{aligned}
$$
Hence, using the Fubini theorem and the Korn inequality, we can continue with the estimate of the full integral as follows
$$
\begin{aligned}
&\int_0^T\int_{\mO \cap \{x_d\in (0,h)\}} \!\!\!\!\!\! |D_d\dot{\bu}\phi|^2\ddd x \ddd t\le Ch^{\frac{p-2}{p}}\left(\int_0^T\int_{(-1,1)^d}\int_0^{2h_0} |D_d\dot{\bu}\phi(t,x',x_d)|^2\ddd x\ddd t \right.\\
&\qquad \left.+ \int_0^T\int_{(-1,1)^d}\int_0^{2h_0}\int_0^{2h_0}\frac{|D_d\dot{\bu}\phi(t,x',x_d)-D_d\dot{\bu}\phi(t,x',y_d)|^2}{|x_d-y_d|^{1+\frac{p-2}{p}}}\ddd x_d \ddd y_d \ddd x' \ddd t\right)\\
&\le Ch^{\frac{p-2}{p}}\left(1 + \int_0^{4h_0}\int_0^T\int_{\mO}\frac{|D_d\Delta_d^s (\dot{\bu}\phi)|^2}{|s|^{1+\frac{p-2}{p}}}\ddd x   \ddd t \ddd s\right)\\
&\le Ch^{\frac{p-2}{p}}\left(1 + \int_0^{4h_0}\int_0^T\int_{\mO}\frac{|\Delta_d^s \bE(\dot{\bu}\phi)|^2}{|s|^{1+\frac{p-2}{p}}}\ddd x   \ddd t \ddd s\right)\\
&\le Ch^{\frac{p-2}{p}}\left(1 + \int_0^{4h_0}\int_0^T\int_{\mO}\frac{|\phi \Delta_d^s \bE(\dot{\bu})|^2}{|s|^{1+\frac{p-2}{p}}}\ddd x   \ddd t \ddd s\right)\\
&\le Ch^{\frac{p-2}{p}}\left(1 + \int_0^{4h_0}\int_0^T\int_{\mO}\frac{|\phi \Delta_d^s \dot{\bsig}|^2+|\phi \Delta_d^s \dot{\bxi}|^2}{|s|^{1+\frac{p-2}{p}}}\ddd x   \ddd t \ddd s\right),
\end{aligned}
$$
where we used the equation to evaluate $\bE(\dot{\bu})$ in terms of $\dot{\bsig}$ and $\dot{\bxi}$. Note that at this step we use the fact that we deal with the kinematic hardening. Now, we use Lemma~\ref{interpol1} to replace time derivative on the right hand side. Hence, doing so, we observe that for all $\delta>0$, we have
$$
\begin{aligned}
&\int_0^T\int_{\mO \cap \{x_d\in (0,h)\}} \!\!\!\!\!\! |D_d\dot{\bu}\phi|^2\ddd x \ddd t\\
&\quad \le C(\delta)h^{\frac{p-2}{p}}\left(1 + \int_0^{4h_0}\frac{1}{s^{1-\delta}}\left(\int_0^T\int_{\mO}\frac{|\phi \Delta_d^s \bsig|^2+|\phi \Delta_d^s \bxi|^2}{|s|^{\frac{3}{1-3\delta}(\delta+\frac{p-2}{p})}}\ddd x   \ddd t \right)^{\frac13 - \delta}\ddd s\right)\\
&\quad \le C(\delta)h^{\frac{p-2}{p}}\left(1 + \sup_{s\in (0,4h_0)}\int_0^T\int_{\mO}\frac{|\phi \Delta_d^s \bsig|^2+|\phi \Delta_d^s \bxi|^2}{|s|^{\frac{3}{1-3\delta}(\delta+\frac{p-2}{p})}}\ddd x   \ddd t \right)^{\frac13 - \delta}.
\end{aligned}
$$
Thus, using the H\"{o}lder inequality and the above estimate, we have
$$
\begin{aligned}
&\int_0^T\left(\int_{\mO \cap \{x_d\in (0,h)\}} \!\!\!\!\!\! |D_d\dot{\bu}\phi|^2\ddd x\right)^{\frac12} \ddd t\le C\left(\int_0^T\int_{\mO \cap \{x_d\in (0,h)\}} \!\!\!\!\!\! |D_d\dot{\bu}\phi|^2\ddd x \ddd t\right)^{\frac12}\\
&\quad \le C(\delta)h^{\frac{p-2}{2p}}\left(1 + \sup_{s\in (0,4h_0)}\int_0^T\int_{\mO}\frac{|\phi \Delta_d^s \bsig|^2+|\phi \Delta_d^s \bxi|^2}{|s|^{\frac{3}{1-3\delta}(\delta+\frac{p-2}{p})}}\ddd x   \ddd t \right)^{\frac16 - \frac{\delta}{2}},
\end{aligned}
$$
which is \eqref{odhad-q2}. To obtain \eqref{odhad-q3}, we proceed similarly, with the only change that from the beginning we have the point-wise estimate $|\bE(\dot {\bu})|\le C(|\dot{\bsig}|+ |\dot{\bxi}|)$, which follows from \eqref{TPM} and \eqref{TPMHe}, respectively.

\end{proof}

\subsection{Estimate for for $\dot{\bsig}$ and $\dot{\bxi}$ via anisotropic embedding}

\begin{Lemma}
Let $p\in (2,\frac{2(d-1)}{d-2})$ and $\beta>0$ be given as
\begin{equation}\label{beta}
\beta:=\frac{p-2}{4(d-1) - 2p(d-2)}
\end{equation}
Then for any solution and any $\delta>0$ there holds
\begin{equation}\label{embde}
\int_0^T\|\dot{\bsig}\phi\|_{p}+\|\dot{\bxi}\phi\|_{p} \ddd t \le \frac{C}{\delta} \left(1+\sup_{h\in (0,1)}\int_0^T \int_{\mO} \frac{|\Delta^h_d\dot{\bsig}\phi|^2+|\Delta^h_d\dot{\bxi}\phi|^2}{h^{2(\beta+\delta)}}  \ddd x \ddd t\right)^{\frac12}.
\end{equation}
\end{Lemma}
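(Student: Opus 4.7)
The plan is to exploit an anisotropic Sobolev--Nikolskii embedding. Writing $F = \phi\dot\bsig$ (and analogously $F = \phi\dot\bxi$), the function $F(t,\cdot)$ carries tangential regularity of index $1/2$ in the time-integrated sense from \eqref{Ni12s} and, by hypothesis, normal regularity of index $\beta+\delta$. The defining formula \eqref{beta} is arranged precisely so that the exponents $\alpha_j = \tfrac12$ for $j=1,\ldots,d-1$ together with $\alpha_d = \beta$ produce, via the anisotropic embedding formula $\tfrac{1}{p} = \tfrac{1}{2} - \bigl(\sum_j 1/\alpha_j\bigr)^{-1}$, exactly the target Lebesgue exponent: one checks $\sum_j 1/\alpha_j = 2(d-1) + 1/\beta = 2p/(p-2)$, hence $1/p = 1/2 - (p-2)/(2p)$.

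The first step converts the Nikolskii-type (sup-in-$h$) hypothesis in the normal direction into a Sobolev--Slobodeckij (integral-in-$h$) bound of slightly smaller index $\beta$. Setting
$$X := \sup_{h\in(0,1)}\frac{1}{h^{2(\beta+\delta)}}\int_0^T\|\Delta_d^h F\|_2^2\,dt,$$
one computes
$$\int_0^T\int_0^1\frac{\|\Delta_d^h F\|_2^2}{h^{1+2\beta}}\,dh\,dt\;\le\;X\int_0^1 h^{2\delta-1}\,dh\;=\;\frac{X}{2\delta},$$
which, by the Besov characterization of $W^{\beta,2}_d$, yields $\int_0^T\|F(t)\|_{W^{\beta,2}_d(\mO)}^2\,dt \le CX/\delta$. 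In the tangential directions, \eqref{Ni12s} provides $\int_0^T\|F(t)\|_{N^{1/2,2}_j(\mO)}^2\,dt \le C$ for each $j=1,\ldots,d-1$, which is sufficient for the embedding below (and may be weakened to $W^{1/2-\varepsilon,2}_j$ for a fixed small $\varepsilon>0$ without introducing any $\delta$-dependence).

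The core step is the pointwise-in-$t$ anisotropic Sobolev embedding
$$\|F(t)\|_{L^p(\mO)}^2 \;\le\; C\,\Bigl(\prod_{j=1}^{d-1}\|F(t)\|_{W^{1/2,2}_j(\mO)}^{2\theta_j}\Bigr)\,\|F(t)\|_{W^{\beta,2}_d(\mO)}^{2\theta_d},$$
with weights $\theta_j=(1/\alpha_j)(\sum_k 1/\alpha_k)^{-1}$ summing to $1$. Integrating in $t$ and applying H\"older's inequality with conjugate exponents $\{1/\theta_j\}$ separates the integral into a product; the tangential factors are uniformly bounded via the previous step, while the normal factor is bounded by $(CX/\delta)^{\theta_d}$, giving
$$\int_0^T\|F\|_{L^p(\mO)}^2\,dt \;\le\; C(1+X/\delta)^{\theta_d}.$$
A final application of H\"older in time, combined with the trivial inequalities $(1+X/\delta)\le(1+X)/\delta$ for $\delta\le 1$ and $\theta_d/2\le 1/2$, yields
$$\int_0^T\|F\|_{L^p(\mO)}\,dt\;\le\;T^{1/2}\bigl(1+X/\delta\bigr)^{\theta_d/2}\;\le\;\frac{C}{\delta}\bigl(1+X\bigr)^{1/2},$$
matching \eqref{embde} after summing the contributions $F=\phi\dot\bsig$ and $F=\phi\dot\bxi$.

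The principal technical obstacle is establishing the anisotropic Sobolev embedding in the precise form used above: for the flat half-space geometry considered here this follows from standard arguments via iterated one-dimensional embeddings, extension to $\mathbb{R}^d$, or a Littlewood--Paley decomposition, with the algebraic identity $2(d-1)+1/\beta=2p/(p-2)$ motivating the exact choice \eqref{beta}. A secondary subtlety is the orientation of $\sup_h$ and $\int_0^T$ in the hypothesis ($\sup_h \int$, which is weaker than $\int \sup_h$); this forces one to integrate against the weight $dh/h^{1+2\beta}$ with exponent $\beta$ strictly below the endpoint $\beta+\delta$, and is exactly what produces the factor $1/\delta$ in the conclusion.
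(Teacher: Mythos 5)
Your proposal is correct in its essentials and reaches the same conclusion by a route that is conceptually parallel but mechanically distinct. The paper does not invoke the ``global'' multiplicative anisotropic Sobolev embedding you use; instead it works with an additive Bochner--space interpolation. Concretely, the paper sets $\lambda=\frac{1}{2\beta(d-1)+1}$, observes the two critical one--dimensional embeddings $W^{\lambda\beta,2}(0,1)\hookrightarrow L^p(0,1)$ and $W^{(1-\lambda)/2,2}(B)\hookrightarrow L^p(B)$, and then uses the cross--interpolation estimate
\begin{equation*}
\|f\|_p^2 \le C\|f\|^2_{W^{\lambda\beta,2}(0,1;W^{(1-\lambda)/2,2}(B))}\le C\Bigl(\|f\|^2_{L^2(0,1;W^{\alpha/2,2}(B))}+\|f\|^2_{W^{\frac{\alpha\lambda\beta}{\alpha-1+\lambda},2}(0,1;L^2(B))}\Bigr)
\end{equation*}
with a free parameter $\alpha\in(1-\lambda,1)$. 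The $\delta$--loss then enters because $\alpha$ must be taken strictly below $1$: at $\alpha\to 1^-$ the tangential cost $\alpha/2$ approaches the unavailable endpoint $1/2$ while the normal cost $\frac{\alpha\lambda\beta}{\alpha-1+\lambda}$ approaches $\beta$ from above, and one makes the integrals in $z'$ and $h$ converge by paying $1/(1-\alpha)$ resp.\ a small gap in the normal exponent. Your version accomplishes the same book-keeping by first converting the normal Nikolskii bound into a Slobodeckij $W^{\beta,2}_d$ bound at the cost of $1/(2\delta)$, and then invoking a multiplicative embedding with exact indices $(1/2,\dots,1/2,\beta)$. The algebraic identity $2(d-1)+1/\beta=2p/(p-2)$ that you check is precisely what makes both schemes critical at the target $L^p$, so the two arguments are ``dual'' presentations of the same trade-off.

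One remark in your proof is, however, not quite right and should be corrected: you write that the tangential control may be weakened to $W^{1/2-\varepsilon,2}_j$ ``for a fixed small $\varepsilon>0$ without introducing any $\delta$-dependence.'' This cannot be literally true. What \eqref{Ni12s} supplies is only the Nikolskii bound $\sup_h h^{-1}\int_0^{T-h}\|\phi\Delta_j^h\dot\bsig\|_2^2\,dt\le C$, hence Slobodeckij control $W^{\gamma,2}_j$ for every $\gamma<1/2$ but not $\gamma=1/2$. If you lower the tangential indices to $1/2-\varepsilon$ with the target $L^p$ held fixed, the required normal index jumps to some $\tilde\beta(\varepsilon)>\beta$ with $\tilde\beta(\varepsilon)\to\beta$ as $\varepsilon\to 0$; and the hypothesis only feeds you Nikolskii control at index $\beta+\delta$, so you need $\tilde\beta(\varepsilon)<\beta+\delta$, i.e.\ $\varepsilon$ must be chosen small depending on $\delta$. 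The $1/\delta$ constant still comes out (the Slobodeckij integral $\int_0^1 h^{2(\beta+\delta-\tilde\beta)-1}\,dh$ is comparable to $1/\delta$ once $\tilde\beta$ is taken halfway), so the conclusion of the Lemma survives, but the coupling between $\varepsilon$ and $\delta$ is unavoidable and is, in effect, exactly the coupling between $\alpha$ and $\delta$ that the paper tracks explicitly through its free parameter.

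The other ``technical obstacle'' you flag --- that the multiplicative anisotropic embedding on a half-cube with the cutoff $\phi$ needs justification --- is fair and is precisely what the paper avoids by unwinding everything into finite-difference integrals over $(0,1)$ and $B$ directly, for which no separate embedding theorem has to be imported. If you want a self-contained argument, the paper's hand-on computation is the safer route; your version is cleaner conceptually but does lean on a result that would itself require a careful proof in this localized setting.
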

\begin{proof}
We use the following version of anisotropic embedding. We assume that $f\in L^2(0,1; L^2((-1,1)^d))$ and denote $B:=(-1,1)^{d-1}$.
We define $\lambda\in (0,1)$ by the relation
$$
\lambda := \frac{1}{2\beta(d-1)+1}
$$
and using the Sobolev embedding, we have (recall the definition of $\beta$ in \eqref{beta})
\begin{equation}\label{needed}
\begin{split}
W^{\lambda\beta,2}(0,1)\hookrightarrow L^{p} (0,1),\\
W^{\frac{1-\lambda}{2},2}(B)\hookrightarrow L^{p} (B).
\end{split}
\end{equation}
Then, we can use the cross-interpolation in Sobolev-Bochner spaces and the above embedding to observe
$$
\|f\|^2_p \le C \|f\|^2_{W^{\lambda\beta,2}(0,1; W^{\frac{1-\lambda}{2},2}(B))}\le C(\|f\|^2_{L^2(0,1; W^{\frac{\alpha}{2},2}(B))}+\|f\|^2_{W^{\frac{\alpha\lambda\beta}{\alpha-1+\lambda},2}(0,1; L^2(B))})
$$
for arbitrary $1-\lambda<\alpha <1$. Next, we use the above inequality for $\dot{\bsig}\phi$ and integrate also over time $t\in (0,T)$. Thus, using also the definition of fractional Sobolev norm, we have (using also the properties of $\phi$ and the a~priori estimate \eqref{infty1})
$$
\begin{aligned}
\int_0^T & \|\dot{\bsig}\phi\|_p^2 \le C\int_0^T \|\dot{\bsig}\phi\|^2_{L^2(0,1; W^{\frac{\alpha}{2},2}(B))}+\|\dot{\bsig}\phi\|^2_{W^{\frac{\alpha\lambda\beta}{\alpha-1+\lambda},2}(0,1; L^2(B))}\ddd t\\
&\le C + C\int_0^T \int_0^1 \int_B \int_B  \frac{|\dot{\bsig}\phi(t,x',x_d)-\dot{\bsig}\phi(t,y',x_d)|^2}{|x'-y'|^{d-1+\alpha}}\ddd x' \ddd y' \ddd x_d  \ddd t\\
&\quad + C\int_0^T \int_0^1 \int_0^1 \int_B \frac{|\dot{\bsig}\phi(t,x',x_d)-\dot{\bsig}\phi(t,x',y_d)|^2}{|x_d-y_d|^{1+\frac{2\alpha\lambda\beta}{\alpha-1+\lambda}}} \ddd x'  \ddd x_d \ddd y_d \ddd t\\
&\le C + C\int_0^T \int_{\mO} \int_{2B}  \frac{|\dot{\bsig}\phi(t,x'+z',x_d)-\dot{\bsig}\phi(t,x',x_d)|^2}{|z'|^{d-1+\alpha}}\ddd z' \ddd x  \ddd t\\
&\quad + C\int_0^T \int_{\mO}\int_0^1 \frac{|\dot{\bsig}\phi(t,x',x_d)-\dot{\bsig}\phi(t,x',x_d+h)|^2}{h^{1+\frac{2\alpha\lambda\beta}{\alpha-1+\lambda}}}  \ddd h \ddd x \ddd t\\
&\le C + C\sup_{i=1,\ldots, d-1} \sup_{h\in (0,1)}\int_0^T \int_{\mO} \frac{|\Delta^h_i \dot{\bsig}\phi|^2}{h} \ddd x  \ddd t  \int_{2B}  \frac{1}{|z'|^{d-2+\alpha}}\ddd z'\\
&\quad +C\sup_{h\in (0,1)}\int_0^T \int_{\mO} \frac{|\Delta^h_d\dot{\bsig}\phi|^2}{h^{2(\beta+\delta)}}  \ddd x \ddd t \int_0^1 \frac{1}{s^{1-2(\beta+\delta)+\frac{2\alpha\lambda\beta}{\alpha-1+\lambda}}}  \ddd s \\
&\le \frac{C}{1-\alpha}  + \frac{C}{2(\beta+\delta)-\frac{2\alpha\lambda\beta}{\alpha-1+\lambda}}\sup_{h\in (0,1)}\int_0^T \int_{\mO} \frac{|\Delta^h_d\dot{\bsig}\phi|^2}{h^{2(\beta+\delta)}}  \ddd x \ddd t.
\end{aligned}
$$
Consequently, for any $\delta>0$, we can find $\alpha\in (0,1)$ such that
$$
2(\beta+\delta)>\frac{2\alpha\lambda\beta}{\alpha-1+\lambda}
$$
and \eqref{embde} for $\dot{\bsig}$ follows by H\"{o}lder inequality. The same scheme is used for the estimate for $\dot{\bxi}$.

\end{proof}

\subsection{Final estimate for normal derivatives - the case of Dirichlet boundary or the case of kinematic hardening}
In this part we finish the proof of the main theorem. In particular, we focus on the normal derivative estimates stated in \eqref{ident-limit} and \eqref{ident-limitHe}. We start with \eqref{nor-start} to conclude the starting estimate
\begin{equation}
\label{Mina}
\sup_{t\in (0,T)}\sup_{h\in (0,1)} \frac{\|\phi \Delta_d^h \bsig(t)\|_2^2 + \|\phi \Delta_d^h \bxi(t)\|_2^2}{h}\le C.
\end{equation}
Then, we use  \eqref{nor-start} (in case of kinematic hardening) or in \eqref{nor-start2} (in case of Dirichlet boundary condition), and the term on the right hand side is replaced by the corresponding estimates in \eqref{odhad-q2} and \eqref{odhad-q3}, respectively. Thus we conclude in both cases that
\begin{equation*}
\begin{split}
\sup_{t\in (0,T)} &\|\phi \Delta_d^h \bsig(t)\|_2^2 + \|\phi \Delta_d^h \bxi(t)\|_2^2 \\
&\le Ch^{\frac32}+ C(\delta)h^{1+\frac{p-2}{2p}}\left(1 + \sup_{s\in (0,4h_0)}\int_0^T\int_{\mO}\frac{|\phi \Delta_d^s \bsig|^2+|\phi \Delta_d^s \bxi|^2}{|s|^{\frac{3}{1-3\delta}(\delta+\frac{p-2}{p})}}\ddd x   \ddd t \right)^{\frac{1-3\delta}{6}}.
\end{split}
\end{equation*}
Hence, it follows that
\begin{equation}\label{nor-start-aaa}
\begin{split}
\sup_{t\in (0,T)}\sup_{h\in (0,4h_0)} &\int_{\mO}\frac{|\phi \Delta_d^h \bsig(t)|^2 + |\phi \Delta_d^h \bxi(t)|^2}{h^{1+\frac{p-2}{2p}}}\ddd x \\
&\le C(\delta)\left(1 + \sup_{s\in (0,4h_0)}\int_0^T\int_{\mO}\frac{|\phi \Delta_d^s \bsig|^2+|\phi \Delta_d^s \bxi|^2}{|s|^{\frac{3}{1-3\delta}(\delta+\frac{p-2}{p})}}\ddd x   \ddd t \right)^{\frac{1-3\delta}{6}},
\end{split}
\end{equation}
provided that the right hand side is finite. Hence, we can start the iteration with \eqref{Mina}. In the first step, thanks to \eqref{Mina}, we can set in \eqref{nor-start-aaa} arbitrary $p<3$ and find sufficiently small $\delta>0$ such that
$$
\frac{3}{1-3\delta}\left(\delta+\frac{p-2}{p}\right)<1
$$
and we immediately get an improvement of \eqref{Mina}. Consequently, iterating such procedure is possible as long as
$$
1+\frac{p-2}{2p}>\frac{3(p-2)}{p} \qquad \Leftrightarrow \qquad  \frac15>\frac{p-2}{2p}.
$$
Thus, it follows that we are able to obtain
\begin{equation}\label{hola}
\begin{split}
\sup_{t\in (0,T)}\sup_{h\in (0,4h_0)} &\int_{\mO}\frac{|\phi \Delta_d^h \bsig(t)|^2 + |\phi \Delta_d^h \bxi(t)|^2}{h^{\frac65 -\delta}}\ddd x \le C(\delta)
\end{split}
\end{equation}
and by \eqref{interpol2} we also have
\begin{equation}\label{hola12}
\begin{split}
\sup_{h\in (0,4h_0)} &\int_0^T\int_{\mO}\frac{|\phi \Delta_d^h \dot{\bsig}|^2 + |\phi \Delta_d^h \dot{\bxi}|^2}{h^{\frac25 -\delta}}\ddd x \ddd t\le C(\delta),
\end{split}
\end{equation}
which finishes the proof for kinematic hardening or the case of Dirichlet boundary conditions.


\subsection{Final estimate for normal derivatives - the case of Neumann boundary and the  isotropic hardening}
In this case we use the anisotropic embedding. We again start with \eqref{nor-start} to conclude the starting estimate
\begin{equation}
\label{Minabb}
\sup_{t\in (0,T)}\sup_{h\in (0,1)} \frac{\|\phi \Delta_d^h \bsig(t)\|_2^2 + \|\phi \Delta_d^h \bxi(t)\|_2^2}{h}\le C.
\end{equation}
Then we start with iteration. Using \eqref{nor-start} and \eqref{odhad-q1}, we observe that
\begin{equation}\label{nor-starttr}
\begin{split}
\sup_{t\in (0,T)}\sup_{h\in (0,1)} &\frac{\|\phi \Delta_d^h \bsig(t)\|_2^2 + \|\phi \Delta_d^h \bxi(t)\|_2^2}{h^{1+\frac{p-2}{2p}}} \le C (1+ \int_0^T\|\dot{\bsig}\phi\|_{p}+\|\dot{\bxi}\phi\|_{p} \ddd t).
\end{split}
\end{equation}
Next, recalling the definition of $\beta$, see \eqref{beta},
$$
\beta:=\frac{p-2}{4(d-1) - 2p(d-2)},
$$
and combining \eqref{embde} and \eqref{interpol2} and the Young inequality, we also have
\begin{equation}\label{embde-gh}
\int_0^T\|\dot{\bsig}\phi\|_{p}+\|\dot{\bxi}\phi\|_{p} \ddd t \le \frac{C}{\delta} \left(1+\sup_{h\in (0,1)}\int_0^T \int_{\mO} \frac{|\Delta^h_d{\bsig}\phi|^2+|\Delta^h_d{\bxi}\phi|^2}{h^{6(\beta+2\delta)}}  \ddd x \ddd t\right),
\end{equation}
where $\delta>0$ is arbitrary. Thus, combining \eqref{nor-starttr} and \eqref{embde-gh}, we have
\begin{equation}\label{nor-jupi}
\begin{split}
\sup_{t\in (0,T)}\sup_{h\in (0,1)} &\frac{\|\phi \Delta_d^h \bsig(t)\|_2^2 + \|\phi \Delta_d^h \bxi(t)\|_2^2}{h^{1+\frac{p-2}{2p}}} \\
&\le  \frac{C}{\delta} \left(1+\sup_{h\in (0,1)}\int_0^T \int_{\mO} \frac{|\Delta^h_d{\bsig}\phi|^2+|\Delta^h_d{\bxi}\phi|^2}{h^{6(\beta+2\delta)}}  \ddd x \ddd t\right).
\end{split}
\end{equation}
Thus, we can again start with iteration, which is possible as long as
$$
1+\frac{p-2}{2p}> 6\beta \quad \Leftrightarrow \quad  \beta<\frac{2d-7+\sqrt{1+4d^2+20d}}{24(d-1)}.
$$
Consequently, we have the final estimate (here $\delta\in (0,1)$ is arbitrary)
\begin{equation*}
\begin{split}
\sup_{t\in (0,T)}\sup_{h\in (0,1)} &\frac{\|\phi \Delta_d^h \bsig(t)\|_2^2 + \|\phi \Delta_d^h \bxi(t)\|_2^2}{h^{\frac{2d-7+\sqrt{1+4d^2+20d}}{4(d-1)}-\delta}} \le  \frac{C}{\delta}.
\end{split}
\end{equation*}
and thanks to time interpolation \eqref{interpol2}, it leads to
\begin{equation*}
\begin{split}
\sup_{h\in (0,1)} &\int_0^T\frac{\|\phi \Delta_d^h \dot{\bsig}\|_2^2 + \|\phi \Delta_d^h \dot{\bxi}\|_2^2}{h^{\frac{2d-7+\sqrt{1+4d^2+20d}}{12(d-1)}-\delta}}\ddd t \le  \frac{C}{\delta},
\end{split}
\end{equation*}
where the constant $C(\delta)$ explodes as $\delta \to 0_+$. Hence, the proof is complete.


\end{document}